\theoremstyle{definition}
\newtheorem{defin}{Definition}[section]
\newtheorem{prop}[defin]{Proposition}
\newtheorem{theorem}[defin]{Theorem}
\newtheorem{lemma}[defin]{Lemma}
\newtheorem{cor}[defin]{Corollary}
\newtheorem*{thm}{Main Theorem}
\newtheorem*{remark}{Remark}
\newtheorem*{Thm}{Theorem}
\theoremstyle{plain} \newtheorem{exa}[defin]{Example}
\def\cleardoublepage{\clearpage\if@twoside \ifodd\c@page\else
  \hbox{}
  \vspace*{\fill}
  \begin{center}
  %  \textlatin{This page intentionally contains only this sentence.}
  \end{center}
  \vspace{\fill}
  \thispagestyle{empty}
  \newpage
  \if@twocolumn\hbox{}\newpage\fi\fi\fi}
\def\cleardoublepage{\clearpage\if@twoside \ifodd\c@page\else
  \hbox{}
  \vspace*{\fill}
  \begin{center}
  %  \textlatin{This page intentionally contains only this sentence.}
  \end{center}
  \vspace{\fill}
  \thispagestyle{empty}
  \newpage
  \if@twocolumn\hbox{}\newpage\fi\fi\fi}
\newcommand{\rank}{\mathop{\operator@font rank}}
\numberwithin{equation}{section}
\renewcommand\section{\@startsection {section}{1}{\z@}%
                                   {-3.5ex \@plus -1ex \@minus -.2ex}%
                                   {2.3ex \@plus.2ex}%
                                   {\normalfont\large\bfseries}}
\renewcommand\subsection{\@startsection{subsection}{2}{\z@}%
                                     {-3.25ex\@plus -1ex \@minus -.2ex}%
                                     {1.5ex \@plus .2ex}%
                                     {\normalfont\bfseries}}
\begin{document}
\title{{Stabiliser of an Attractive Fixed Point of an IWIP Automorphism of a free product}}	

\author{Dionysios Syrigos}
\date{\today}
\maketitle

\begin{abstract}
	For a group $G$ of finite Kurosh rank and for some arbiratily free product decomposition of $G$, $G = H_1 \ast H_2 \ast ... \ast H_r \ast F_q$, where $F_q$ is a finitely generated free group, we can associate some (relative) outer space $\mathcal{O}(G, \{H_1,..., H_r \})$. We define the relative boundary $\partial (G, \{ H_1, ..., H_r \}) = \partial(G, \mathcal{O}) $ corresponding to the free product decomposition, as the set of infinite reduced words (with respect to free product length). By denoting $Out(G, \{ H_1, ..., H_r \})$ the subgroup of $Out(G)$ which is consisted of the outer automorphisms which preserve the set of conjugacy classes of $H_i$'s, we prove that for the stabiliser $Stab(X)$ of an attractive fixed point in $X \in \partial (G, \{ H_1, ..., H_r \})$ of an irreducible with irreducible powers automorphism relative to $\mathcal{O}$, it holds that it has a (normal) subgroup $B$ isomorphic to subgroup of $\bigoplus \limits_{i=1} ^{r} Out(H_i)$ such that $Stab(X) / B$ is isomorphic to $\mathbb{Z}$. The proof relies heavily on the machinery of the attractive lamination of an IWIP automorphism relative to $\mathcal{O}$.
\end{abstract}

	\newpage
	\tableofcontents
	\newpage
	
\section{Introduction}
The outer automorphism group $Out(F_n)$ of a finitely generated free group $F_n$ has been extensively studied. In particular, $Out(F_n)$ has been studied via its action on the outer space $CV_n$, which has been introduced by Culler and Vogtmann in \cite{culler1986moduli}. Let $G$ be a group of finite Kurosh rank, i.e. $G$ can be written as a finite free product of the form $G = G_1 \ast G_2 \ast ... \ast G_m \ast F_n$, where all the $G_i$'s are freely indecomposable and $F_n$ is a finitely generated free group. The concept of the outer space can be generalised for such a group $G$ and there is a contractible space of $G$-trees, $\mathcal{O} (G, \{G_1, G_2,...,G_m\}, F_n)$ on which $Out(G)$ acts. This space was introduced by Guirardel and Levitt in \cite{guirardel2007outer}. In fact, for a group $G$ as above and any non-trivial free product decomposition $G = H_1 \ast H_2 \ast ... \ast H_r \ast F_q$ (here $H_i$ may be freely decomposable or isomorphic to $\mathbb{Z}$), they constructed a relative outer $\mathcal{O} (G, \{H_1, H_2,...,H_r\}, F_q)$ on which the subgroup $Out(G, \{ H_1, ..., H_r \} )$ of $Out(G)$ acts, where $Out(G, \{ H_1, ..., H_r \} ) = \{ \Phi \in Out(G) |$ for every $i = 1,.., r$, there is some $j$ s.t. $ \phi(H_i) = g_i H_j g^{-1}  \}$.\\
For a finitely generated free group $F_n$, it is well known that we can define the boundary  $\partial F_n$ which is a Cantor set and it can be viewed as the set of infinite reduced words (for some fixed basis of $F_n$). Moreover, an automorphism $\phi \in Aut(F_n)$, can be seen as a quasi-isometry of $F_n$ and therefore it induces a homeomorphism of the boundary $\partial F_n$, which we denote by $\partial \phi$. As a consequence, we can study the subgroup $Stab(X)$ of automorphisms that fix the infinite word $X \in \partial F_n$, i.e. $
\phi \in Stab(X)$ iff $\partial \phi(X) = X$. In this paper, we would like to study the corresponding notions for a group $G$ (of finite Kurosh rank) relative to some fixed non-trivial free product decomposition and especially the  $Aut(G, \{H_1, H_2,...,H_r\} )$- stabiliser of infinite reduced words. Firstly, we fix the outer space corresponding to some free product decomposition of $G$, as above. In this case, we can define a (relative) boundary $\partial (G, \{ H_1, ..., H_r \})$ as the set of infinite reduced words for the free product length (for some fixed basis of the free group). Similarly, every $\phi \in Aut(G, \{ H_1, ..., H_r \} )$ induces a homeomorphism $\partial \phi$ of the relative boundary $\partial (G, \{ H_1, ..., H_r \})$.
It is natural to ask if we can compute the subgroups $Stab(X)$ for any $X \in \partial (G, \{ H_1, ..., H_r \}) $. However, there is no full answer even in the free case. There are some partial results and for example it is not difficult to see that the stabiliser of any point of the form $u^{\infty} = u u u ...$ (where $u$ is a hyperbolic cyclically reduced element of $G$),  is the subgroup of $Aut(G, \{ H_1, ..., H_r \})$, which is consisted of the automorphisms that fix $u$. There is also a result of Hilion in this direction (see \cite{hilion2007maximal}). An automorphism of $Aut(F_n)$ is said IWIP (i.e. irreducible, with irreducible powers), if no non-trivial free factor of $F_n$ is mapped by some power $k > 1$ to a conjugate of itself. Therefore, using this terminology, Hilion's result can be stated as:

\begin{Thm}\label{HILION}[Hilion] \cite{hilion2007maximal}
If $X \in \partial F_n$ is an attractive fixed point of an IWIP automorphism, then $Stab(X)$ is infinite cyclic.
\end{Thm}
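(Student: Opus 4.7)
The plan is to use the attractive lamination $\Lambda^+_\phi$ of $\phi$ as the central structural object. The inclusion $\langle \phi \rangle \subseteq Stab(X)$ is clear, so the task reduces to producing a surjective homomorphism $\chi : Stab(X) \to \mathbb{Z}$ satisfying $\chi(\phi) = 1$ with trivial kernel; the theorem then follows immediately.

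First, I would fix an affine train track representative $f : \Gamma \to \Gamma$ of $\phi$ with Perron--Frobenius expansion factor $\lambda > 1$ and recognise $X$ as the endpoint of a singular ray, i.e.\ the lift of an expanding periodic direction at a fixed vertex (passing to a positive power of $\phi$ if necessary, which is harmless since $Stab(X) = Stab(\phi^k(X))$). Next, I would show that every $\psi \in Stab(X)$ preserves the lamination $\Lambda^+_\phi$: since $X$ is an endpoint of a singular leaf $\ell$, the image $\psi(\ell)$ is a leaf of the attractive lamination of the conjugate $\psi \phi \psi^{-1}$ and is asymptotic to $X$; uniqueness of the attractive IWIP lamination then yields $\psi(\Lambda^+_\phi) = \Lambda^+_\phi$.

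Once preservation of $\Lambda^+_\phi$ is established, I would define $\chi$ through the expansion factor: the subgroup $Stab(\Lambda^+_\phi) \leq Out(F_n)$ admits a canonical homomorphism $\mu$ to $\mathbb{R}^{>0}$ recording how each element scales the canonical transverse measure of $\Lambda^+_\phi$. Results of Bestvina--Feighn--Handel identify the image of $\mu$ with the cyclic group $\lambda^{\mathbb{Z}}$, so $\chi(\psi) := \log_\lambda \mu(\psi)$ is a well-defined homomorphism $Stab(X) \to \mathbb{Z}$ with $\chi(\phi) = 1$, hence surjective.

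The main obstacle, and the technical heart of the argument, is proving that $\ker \chi$ is trivial. An element $\psi \in \ker \chi$ fixes $X$, preserves $\Lambda^+_\phi$, and scales its transverse measure by $1$; it therefore acts as an isometry on the $\mathbb{R}$-tree dual to $\Lambda^+_\phi$. Combined with fixation of the initial singular direction corresponding to $X$, this should force $\psi$ to act trivially on a non-empty open set of half-leaves issuing from $X$, and by minimality of $\Lambda^+_\phi$ and density of the orbit of such a half-leaf, $\psi$ must act as the identity on all of $\partial F_n$; hence $\psi$ is trivial in $Out(F_n)$. This rigidity step is where the IWIP hypothesis is really used, and verifying it rigorously (probably by combining bounded cancellation with the expansion dynamics at the periodic point $X$) is the part I expect to be most delicate.
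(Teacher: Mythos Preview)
The high-level architecture of your proposal matches the paper's (which, strictly speaking, only \emph{cites} Hilion for this statement but proves the free-product generalisation by the same scheme): inject $Stab(X)$ into $Stab(\Lambda^+_\phi)\subset Out(F_n)$, invoke the Bestvina--Feighn--Handel stretch homomorphism onto $\mathbb{Z}$, and kill the kernel. Two of your steps, however, are not yet arguments.

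\textbf{Preservation of $\Lambda^+_\phi$.} Your sentence ``uniqueness of the attractive IWIP lamination then yields $\psi(\Lambda^+_\phi)=\Lambda^+_\phi$'' does not close. You have that $\psi(\ell)$ is a leaf of $\Lambda^+_{\psi\phi\psi^{-1}}$ with one end at $X$; but $\psi\phi\psi^{-1}$ is a \emph{different} IWIP, and there is no uniqueness statement saying two IWIP laminations that share an endpoint of a leaf must coincide. A line asymptotic to $X$ need not be a leaf of $\Lambda^+_\phi$. The paper's route (Proposition~\ref{Pr 4.1} and Theorem~\ref{Th 4.2.}) is genuinely combinatorial: using an appropriate train-track representative one exhibits a ray $R_v$ representing $X$ with an explicit splitting into ``regular'' bricks (iterated edge images, hence laminary) and uniformly bounded ``singular'' bricks (Nielsen pieces); then quasi-periodicity of leaves together with bounded cancellation forces, for every laminary word $u$, an occurrence of $h_{\#,C}(u)$ inside a regular brick of $R_v$, so $h_{\#,C}(u)$ is again laminary. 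This is the technical core and is not bypassed by an asymptotic/uniqueness shortcut.

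\textbf{The kernel.} Your plan via the isometric action on the dual $\mathbb{R}$-tree may be workable but is harder than necessary. The paper's argument (Proposition~\ref{INFINITE CYCLIC}) is shorter: by BFH the kernel of the stretch homomorphism on $Stab(\Lambda^+_\phi)$ is finite, so any $\psi\in\ker\chi$ has finite order; for finite-order $\psi$ every boundary fixed point is singular, i.e.\ $X\in\partial\,Fix(\psi)$; but $Fix(\psi)$ is then a proper free factor, hence does not carry $\Lambda^+_\phi$, and since every long regular brick of $R_v$ contains any prescribed laminary word, no ray representing $X$ can lie in the $Fix(\psi)$-subtree. This forces $\psi=\mathrm{id}$ without touching the dual tree.

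One smaller omission: you never pass from $Aut(F_n)$ to $Out(F_n)$. The paper first checks (Proposition~\ref{injective}) that $X$ is not rational, so the quotient map $Aut(F_n)\to Out(F_n)$ is injective on $Stab(X)$; only then does it make sense to land in $Stab(\Lambda^+_\phi)$.
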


In the general case, we say that an automorphism $\phi$ is IWIP relative to $\mathcal{O}$, if there is non-trivial free factor $B$ of $G$ that strictly contain some conjugate of some $H_i, i = 1,...,r$, it is mapped by some power $\phi ^{k}$, $k > 1$ to a conjugate of itself.
Then the main result of the present paper is a generalisation of the previous theorem.\\
 However, here there is a difference that arises from the factor automorphisms of the $H_i$'s. More precisely:

\begin{thm} \label{MAIN}
	If $X \in \partial  (G, \{ H_1, ..., H_r \})$ is an attractive fixed point of an IWIP automorphism $\phi$, then $Stab(X)$ has a subgroup $B$ isomorphic to a subgroup of  $\bigoplus \limits_{i=1} ^{p} Out(H_i)$ and $ Stab(X) / B$ is infinite cyclic.
\end{thm}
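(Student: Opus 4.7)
The plan is to adapt Hilion's strategy for Theorem \ref{HILION} to the free product setting, with the attractive lamination $\Lambda^+_\phi$ of the relative IWIP $\phi$ as the central dynamical object. First, using the relative train-track machinery for automorphisms of free products, one builds an attractive lamination $\Lambda^+_\phi\subset\partial^2(G,\{H_i\})/G$ having $X$ as an endpoint of its leaves, and such that the forward $\phi$-iterates of finite prefixes of $X$ are dense in $\Lambda^+_\phi$. This is the object referred to in the abstract, and it plays the same role as the Bestvina--Feighn--Handel lamination does in the free case.

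The first step is to establish $Stab(X)\subseteq Stab(\Lambda^+_\phi)$: if $\psi\in Stab(X)$, then $\psi\phi\psi^{-1}$ is again a relative IWIP with $X$ as an attractive fixed point, and its attractive lamination is $\partial\psi(\Lambda^+_\phi)$; uniqueness of the attractive lamination at a given attractive fixed point forces $\partial\psi(\Lambda^+_\phi)=\Lambda^+_\phi$. Next, I would construct a homomorphism $\mu\colon Stab(X)\to\mathbb{R}_{>0}$ by tracking the scaling factor imposed on the transverse Perron--Frobenius measure of $\Lambda^+_\phi$. The image of $\mu$ is a discrete subgroup of $\mathbb{R}_{>0}$, hence infinite cyclic, and $\mu(\phi)$ is the Perron--Frobenius eigenvalue of $\phi$; after taking logarithms one obtains a surjection $Stab(X)\twoheadrightarrow\mathbb{Z}$ sending $\phi$ to a generator (possibly after replacing $\phi$ by a power).

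The core of the argument is to identify the kernel $B=\ker\mu$ with a subgroup of $\bigoplus_{i=1}^{r}Out(H_i)$. Using the IWIP hypothesis relative to $\mathcal{O}$, one rules out the possibility that any $\psi\in B$ permutes the conjugacy classes $[H_i]$ non-trivially: such a permutation would produce a non-trivial $\psi$-periodic free factor strictly containing a conjugate of some $H_i$, contradicting the definition of relative IWIP. Consequently $\psi$ induces a well-defined outer automorphism of each $H_i$, yielding the restriction map $B\to\bigoplus_{i=1}^{r}Out(H_i)$; normality of $B$ in $Stab(X)$ is automatic from its description as a kernel.

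The main technical obstacle I anticipate is proving injectivity of this restriction map: one needs to show that any $\psi\in B$ which is inner on each $H_i$, fixes $X\in\partial(G,\{H_i\})$, and preserves $\Lambda^+_\phi$ with trivial scaling, must itself be inner on $G$. Unlike the free case, where $X$ together with $\Lambda^+_\phi$ rigidly determines $\psi$ up to powers of $\phi$, the free product structure introduces inner-automorphism ambiguity on each factor, and one has to combine the combinatorics of the free product length of $\phi^n$-images of prefixes of $X$ with the rigidity of $\Lambda^+_\phi$ to exclude any residual freedom. This is the step where the combinatorial geometry of $\partial(G,\{H_i\})$, the relative train-track dynamics, and the free product decomposition interact most intricately, and is the main novelty compared to the purely free setting.
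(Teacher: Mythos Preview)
Your high-level plan---embed $Stab(X)$ into the stabiliser of the attractive lamination and read off the structure---is the paper's strategy, but several of your steps either diverge from the paper or contain genuine gaps.

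First, your shortcut for $Stab(X)\subseteq Stab(\Lambda^+_\phi)$ assumes ``uniqueness of the attractive lamination at a given attractive fixed point''. This is not a free fact: a priori two relative IWIPs sharing an attractive fixed point could have different laminations, and establishing otherwise is essentially the content of the paper's Theorem~\ref{Th 4.2.}. The paper does not argue via conjugation. Instead it first produces (Proposition~\ref{Pr 4.1}) a ray $R_v$ representing $X$ together with an \emph{adapted splitting} into regular bricks (iterated $f$-images of edges) and singular bricks (bounded concatenations of $N$-paths), and then shows directly, using quasiperiodicity of leaves and bounded cancellation, that for any $\psi\in Stab(X)$ and any laminary word $u$ the image $h_{\#,C}(u)$ lands inside some regular brick of $R_v$, hence back in the laminary language.

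Second, the paper does not build the stretch homomorphism from scratch. It quotes a structure theorem for $Stab(\Lambda^+_\phi)$ proved in earlier work (Theorem~\ref{Virtually cyclic}): there is a \emph{periodic} normal subgroup $A$ such that $Stab(\Lambda^+_\phi)/A$ is an extension of a subgroup of $\bigoplus_i Out(H_i)$ by $\mathbb{Z}$. The decisive new step---entirely absent from your outline---is to kill $A$ when restricted to $Stab(X)$. This is Proposition~\ref{INFINITE CYCLIC}: any finite-order $\psi\in Aut(G,\{H_i\}^t)$ fixing $X$ is the identity. The argument is that every boundary fixed point of a finite-order automorphism is singular, so $X\in\partial Fix(\psi)$; but by Kurosh-rank and index considerations $Fix(\psi)$ has infinite index unless $\psi=\mathrm{id}$, and an infinite-index subgroup cannot carry $\Lambda^+_\phi$, whereas the regular bricks of $R_v$ force every laminary word to appear in any ray representing $X$. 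Without this torsion-freeness lemma your kernel $\ker\mu$ need not inject into $\bigoplus_i Out(H_i)$; the ``main technical obstacle'' you flag is precisely this, and the paper resolves it by the argument just described rather than by the combinatorial rigidity you propose.

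Finally, your argument ruling out nontrivial permutations of the $[H_i]$ is wrong as written: a $\psi$-periodic free factor would contradict $\psi$ being IWIP, but the IWIP hypothesis is on $\phi$, not on an arbitrary $\psi\in B$. The paper avoids this issue by working inside $Out(G,\{H_i\}^t)$ in the statement of Theorem~\ref{Virtually cyclic}.
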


In our case, there are examples of $X$, as above, where $Stab(X)$ is not infinite cyclic. We describe such an example in the last section, see \ref{example}. The main idea is that there are attractive fixed words of IWIP automorphisms that contain even finitely many elements of the elliptic free factors. Moreover, if a factor automorphism (an automorphism of some $H_i$) fixes these words, then it stabilises the attractive fixed point. So if $H_i$'s are sufficiently big, there are non-trivial automorphisms of $H_i$ that fix these words. As a consequence, we can find arbitrarily large subgroups of $Stab(X)$. On the other hand, if we suppose that every $Out(H_i)$ is finite, then we have a similar result as in the free case. In particular:
\begin{cor}
	If $X \in \partial  (G, \{ H_1, ..., H_r \})$ is an attractive fixed point of an IWIP automorphism $\phi$ and every $Out(H_i)$ is finite, then $Stab(X)$ is virtually cyclic.
\end{cor}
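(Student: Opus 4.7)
The plan is to apply Theorem \ref{MAIN} directly. Under the hypothesis that every $Out(H_i)$ is finite, the direct sum $\bigoplus_{i=1}^{p} Out(H_i)$ is finite, so the subgroup $B$ produced by the main theorem is finite. Since $B$ is normal in $Stab(X)$ (implicit in the existence of the quotient $Stab(X)/B$), we obtain a short exact sequence
\[
1 \longrightarrow B \longrightarrow Stab(X) \longrightarrow \mathbb{Z} \longrightarrow 1
\]
with $B$ finite. The entire content of the corollary then reduces to the elementary fact that every extension of $\mathbb{Z}$ by a finite group is virtually cyclic.

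To verify this reduction in a self-contained way, I would consider the conjugation action of $Stab(X)$ on $B$, which yields a homomorphism $Stab(X) \to Aut(B)$. Since $Aut(B)$ is finite, its kernel $K$ has finite index in $Stab(X)$ and centralises $B$. The composition $K \hookrightarrow Stab(X) \twoheadrightarrow \mathbb{Z}$ has image of finite index in $\mathbb{Z}$, so its image is infinite cyclic, and its kernel equals $K \cap B$, a finite central subgroup of $K$. Pick a preimage $h \in K$ of a generator of $K/(K \cap B)$; then $h$ has infinite order, $\langle h \rangle$ meets the torsion subgroup $K \cap B$ trivially, and $K = (K \cap B) \times \langle h \rangle$. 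Hence $\langle h \rangle$ has finite index $|K \cap B|$ in $K$, and $K$ has finite index in $Stab(X)$, so $\langle h \rangle \cong \mathbb{Z}$ is a cyclic subgroup of finite index in $Stab(X)$.

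There is no substantive obstacle here: all of the genuine work is in Theorem \ref{MAIN}, and the corollary is just the observation that finiteness of the $Out(H_i)$ collapses the contribution of the factor automorphisms, leaving the familiar virtually cyclic picture of Hilion's theorem (Theorem \ref{HILION}).
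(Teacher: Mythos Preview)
Your proposal is correct and matches the paper's approach: the paper states this corollary with no proof, calling it ``an obvious corollary'' of the main theorem. You have simply supplied the (elementary) details showing that an extension of $\mathbb{Z}$ by a finite group is virtually cyclic, which is exactly the intended deduction.
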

Our proof is similar to that of \cite{hilion2007maximal}, but we have to adjust the notions and to use the generalisations of the results used by Hilion, for the general case of free products. In particular, we use the work of Francaviglia and Martino \cite{francaviglia2013stretching} for train track representatives of IWIP automorphisms of a free product instead of the classical notion of train track representatives of automorphisms of free groups \cite{bestvina1992train}.
In fact, here an IWIP automorphism relative to $\mathcal{O}$ can be represented by a train-track map which is a $G$-equivariant, Lipschitz map $f : T 
\rightarrow T$, where $T \in \mathcal{O}$ and $f(gx) = \phi(g) f(x)$, with the property that no backtracking subpath occurs if one
iterates the train-track map on any edge of $T$. As a consequence of the general notion of train-track representatives, the author in \cite{syrigos2014irreducible} generalised the work of Bestvina, Handel and Mosher in the free case \cite{bestvina1997laminations}, and in particular we have the notion of the attractive lamination of an IWIP automorphism.\\
Now let us describe the basic steps of the proof.
Fistly, we construct a nice splitting of a given attractive fixed point $X$ of an IWIP automorphism, using train track representatives, which
matches the language of the attractive lamination.
Then we relate the subgroup $Stab(X)$ to the stabiliser of the attractive lamination, and so using the fact proved in \cite{syrigos2014irreducible} about the stabiliser of the attractive lamination and a technical lemma, and more specifically the fact that $Stab(X) \cap Out(G, \{H_i \} ^{t}) $ is torsion free, we get the main result.\\
As we have seen, there are a lot of facts that they are shard by $CV_n$ and the general space $\mathcal{O}$. As we have already mentioned that the train track representatives can be generalised in the general case. In the same paper, there is the construction and the properties of the Lipschitz metric for $\mathcal{O}$ which is a metric that the same authors previously studied for $CV_n$ (see \cite{francaviglia2011metric}). Recently, there are more papers that they indicate that we can find more similarities between $CV_n$ and $\mathcal{O}$. For example, the construction of hyperbolic spaces on which $Out(G, \{ H_1, ..., H_r \})$ acts (\cite{handel2014relative}), \cite{horbez2014hyperbolic}), the boundary of outer space (\cite{horbez2014boundary}), the Tits alternative for subgroups of $Out(G)$ (\cite{horbez2014tits}), the study of the asymmetry of the Lipschitz metric(\cite{syrigos2015asymmetry}) and the study of the centralisers of IWIP automorphisms(\cite{syrigos2014irreducible}).\\
OUTLINE: In Section 2, we recall some useful definitions and facts, in additional we generalise some well known notions for free groups to the free product case and we prove some basic preliminary results that we need for the main theorem.  In Section 3, we describe the construction of the attractive lamination for an IWIP automorphism and we list some properties. The last section is devoted to the proof of the main theorem.

\section{Preliminaries}
\subsection{$\mathbb{R}$ -trees, Kurosh rank}

Let $G$ be a group of \textit{ finite Kurosh rank} i.e $G$ splits as a free product $G = H_{1} \ast ...\ast H_{s} \ast F_{r}$, where every $H_i$ is non-trivial, not isomorphic to $\mathbb{Z}$ and freely indecomposable. Here the Kurosh rank of $G$ is just the number $s+r$. This decomposition is called the \textit{Gruskho decomposition}. It is the "minimal" decomposition of $G$ and it is unique, in the sense that the free rank $r$ is well defined and the $H_i$'s are unique up to conjugation. The class of groups of finite Kurosh rank contain strictly the class of finitely generated groups (by the Grushko theorem). We are interested only for groups which have finite Kurosh rank.
\\
In particular, for such a group $G$ we fix an \textit{arbitary} (non-trivial) free product decomposition $G = H_{1} \ast ...\ast H_{m} \ast F_{n}$ (i.e. we don't assume that every $H_i$ is not infinite cyclic or even freely indecomposable). However, we usually assume that $m+n > 2$. These groups admit co-compact actions on $\mathbb{R}$-trees (and vice-versa). It is useful that we can also apply the theory in the case that $G$ is free, and the $H_i$'s are certain free factors of $G$ (relative free case).\\

We consider isometric actions of the group $G$ on $\mathbb{R}$-trees induced by the free product decomposition and, more specifically, we say that $T$ is a \textit{$G$-tree,} if it is a simplicial metric tree $ (T, d_T )$, where $G$ acts simplicially on $T$ (sending vertices to vertices and edges to edges) and for all $g \in G, e \in E(T) $ we have that $e$ and $ge$ are isometric. Moreover, we suppose that every $G$-action is \textit{minimal}, which means that there is no $G$-invariant proper subtree.\\
Now let's fix a $G$-tree $T$. An element $g \in G$ is called \textit{hyperbolic}, if it doesn't fix any points of $T$. Any hyperbolic element $g$ of $G$ acts by translations on a subtree of $T$ homeomorphic to the real line, which is called the axis of $g$ and is denoted by $axis_T (g)$. The \textit{translation
	length} of $g$ is the distance that $g$ translates its axis. The action of $G$ on $T$
	defines a length function denoted by
	\begin{equation*}
	\ell_T : G \rightarrow R, \ell_T (g) : = \inf \limits_{x \in T} d_T (x, gx).
	\end{equation*}
	In this context, the infimum is always minimum and we say that $g \in G$ is hyperbolic if and only if $\ell_T (g) > 0$. Otherwise, $g$ is called \textit{elliptic} and it fixes a (unique) point of $T$.
	For more details about group actions on $\mathbb{R}$-trees, see \cite{culler1987group}.

\subsection{Relative Outer Space}
In this subsection we recall some basic definitions and properties. More details about the relative outer space can be found in \cite{francaviglia2013stretching}.\\
We consider $G$-trees as in the previous subsection.
We will define an outer space $\mathcal{O} = \mathcal{O}(G, (H_i)^{m}_{i=1}, F_n)$ relative to some fixed free product decomposition of $G$. More specifically, the elements of the outer space can be thought as simplicial metric $G$-trees, up to $G$-equivariant homothety.
Moreover, we require that these $G$-trees also satisfy the following conditions:
\begin{itemize}
	\item The action of $G$ on $T$ is minimal.
	\item The edge stabilisers are trivial.
	\item There are finitely many orbits of vertices with non-trivial stabiliser, more precisely for every $H_i$, $i = 1,..., m$ (as above) there is exactly one vertex $v_i$ with stabiliser $H_i$ (all the vertices in the orbits of $v_i$'s are called \textit{non-free vertices}).
	\item All other vertices have trivial stabiliser (and we call them \textit{free vertices}).
	\item The quotient $G / T$ is a finite graph of groups
\end{itemize}
Note that the last condition follows from the others, but we mention it in order to emphasise the importance of the co- compactness of the action.

\underline{\textbf{Action}}: Let $Aut(G, \mathcal{O})$ be the subgroup of $Aut(G)$  that preserve the set of conjugacy classes of the $H_i$ 's. Equivalently, $\phi \in Aut(G)$ belongs to $Aut(G,\mathcal{O})$ iff $\phi(H_i)$ is conjugate to one of the $H_j$ 's (in general, $i$ may be different to $j$). The group $Aut(G,\mathcal{O})$ admits a natural action on a simplicial tree by "changing the action", i.e. for $\phi \in Aut(G, \mathcal{O})$ and $T \in \mathcal{O}$, we define $\phi(T)$ to be the metric tree with $T$, but the action is given by $g*x = \phi(g)x$ (where the action in the right hand side is the action of the $G$-tree $T$). As $Inn(G)$ acts on $\mathcal{O}$ trivially, $Out (G,\mathcal{O}) = Aut(G,\mathcal{O})/ Inn(G)$ acts on $\mathcal{O}$. Note also that in the case of the Grushko decomposition, we have $Out(G) = Out(G,\mathcal{O})$.\\

%\underline{\textbf{Kernel of the action:}}
%If $m=0, n>2$, it is well known that if some automorphism $\phi \in Aut(F_n)$ acts on $\mathcal{O} = CV_n$ trivially, then $\phi$ is conjugate to the identity. In the general case, this is not true. For example, it is easy to see that the factor automorphisms of $H_i$'s can be seen as automorphisms of the whole group $G$, and they act trivially on every $T$ of $\mathcal{O}$. Let denote by $K_{\mathcal{O}}$ the kernel of the action of $Out(G, \mathcal{O})$ on $\mathcal{O}$ and it can be proved (see \cite{syrigos2014irreducible}) that:

%\begin{lemma}
%	It holds that $K_{\mathcal{O}} = (\bigoplus \limits_{i=1}^{m} Aut(H_{i})) Inn(G)$.\\
%	Namely, $\Phi \in K_{\mathcal{O}}$ iff $\Phi$ is an automorphism which is the identity restricted on $F_n$ and it is an automorphism of $H_i$ restricted to each $H_i$, up to inner conjugation.
%\end{lemma}
%Similarly, we have a characterisation of the kernel of the $Aut(G, \mathcal{O})$- action.
\begin{remark}
Note that for a $g \in G$ and $T, S \in \mathcal{O}$, it holds that $g$ is hyperbolic relative to $T$ iff $g$ is hyperbolic relative to $S$. Therefore it makes sense to say that $g$ is hyperbolic relative to $\mathcal{O}$, as we will do. We denote by $Hyp(\mathcal{O})$ the set of hyperbolic elements of $\mathcal{O}$.
\end{remark}

\subsection{Topological Representatives, $\mathcal{O}$ - Maps}

\underline{\textbf{Edge paths:}}
Firstly, we would like to define the notion of an edge path for some tree $T \in \mathcal{O}$. More specifically, since $T$ is an $\mathbb{R}$-tree we have that any edge is isometric to the interval $[0,\ell(e)]$.
We say that an edge path is a reduced path of the form $e_1e_2...e_n$ (without backtracking). We can also define an infinite edge path, as an infinite reduced path of the form $e_1 e_2...e_n e_{n+1}...$. Similarly, we can define a bi-infinite edge path. We usually call paths lines these paths.\\
\underline{\textbf{Tightening:}} Every path $p$ is homotopic (relative endpoints) to a unique edge path $[p]$ in $T$. Actually, we can obtain from $p$ the path $[p]$, after removing the backtracking, and we say that $[p]$ is obtained by tightening $p$.\\
\\
\underline{\textbf{$\mathcal{O}$ - maps:}}
\begin{defin}
We say that a map between trees $A,B \in \mathcal{O}$, $f : A \rightarrow B$ is an \textbf{$\mathcal{O}$- map}, if it is a $G$-equivariant, Lipschitz continuous, surjective function.
\end{defin}

It is very useful to know that there are such maps between any two trees. This is true and, additionally, by their construction they coincide on the non - free vertices. More specifically, by \cite{francaviglia2013stretching} we get:
\begin{lemma}\label{O-maps}
For every pair $A,B \in \mathcal{O}$; there exists a $\mathcal{O}$-map $f : A \rightarrow B$.
Moreover, any two $\mathcal{O}$-maps from $A$ to $B$ coincide on the non-free vertices.
\end{lemma}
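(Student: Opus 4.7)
The plan is to dispatch existence by an equivariant extension argument over a fundamental domain, and uniqueness on non-free vertices by a direct stabiliser calculation using the free-product structure.

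For existence, I would first choose a fundamental domain $D \subset A$ for the $G$-action; since $G\backslash A$ is a finite graph of groups, $D$ contains only finitely many vertices, split into non-free vertices (one for each $H_i$ with stabiliser exactly $H_i$) and free vertices. I define $f$ on $D$ by setting $f(v_i) := w_i$ for the non-free vertex $v_i$ of $A$ with stabiliser $H_i$, where $w_i$ is the unique vertex of $B$ with stabiliser $H_i$, and sending each free vertex of $D$ to an arbitrary point of $B$. I then extend $f$ to all vertices of $A$ by the prescription $f(gx) := g f(x)$; this is well-defined precisely because on non-free vertices of $D$ the image was chosen to be a fixed point of the stabiliser, while free vertices have trivial stabiliser. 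Finally, I extend $f$ across each edge $e$ of $A$ by sending $e$ to the unique reduced geodesic in $B$ between the images of its endpoints, parametrised affinely by arc-length. The resulting map is $G$-equivariant and continuous, and Lipschitz because there are finitely many $G$-orbits of edges in $A$ and each receives a fixed affine stretching factor. Surjectivity follows from the fact that $f(A)$ is connected (hence a subtree of $B$) and $G$-invariant, so by minimality of $B$ we must have $f(A) = B$.

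For uniqueness on the non-free vertices, let $f : A \to B$ be any $\mathcal{O}$-map and let $v \in A$ be a non-free vertex with stabiliser $H_i$. By $G$-equivariance, for every $h \in H_i$ we have $h \cdot f(v) = f(hv) = f(v)$, so $f(v)$ is fixed pointwise by $H_i$. Since $H_i$ is non-trivial, $f(v)$ must lie in the non-free part of $B$, and its stabiliser has the form $g H_j g^{-1}$ for some $j$ and $g \in G$, with $H_i \subseteq g H_j g^{-1}$. Because the $H_k$'s are the vertex groups of a free-product decomposition of $G$, the family of their conjugates $\{gH_kg^{-1}\}_{k,g}$ is malnormal in the sense that any two such conjugates either coincide or intersect trivially; hence $H_i = g H_j g^{-1}$, which forces $j = i$ and $g \in H_i$. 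Consequently $f(v) = g w_i = w_i$, independent of the choice of $f$, and this shows that any two $\mathcal{O}$-maps agree on the entire $G$-orbit of $v$, i.e.\ on every non-free vertex.

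The only non-routine ingredient I anticipate is the malnormality-type step: ruling out that a non-trivial free factor $H_i$ could sit properly inside a conjugate of another $H_j$. This is standard for free products, but is the one place the free-product structure (as opposed to just the existence of a $G$-tree) is really used; everything else is equivariant interpolation plus the minimality of $B$.
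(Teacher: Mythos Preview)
The paper does not give its own proof of this lemma; it simply quotes it from Francaviglia--Martino \cite{francaviglia2013stretching}. Your argument is correct and is essentially the standard construction one finds there: define the map on a fundamental domain (forced on non-free vertices, arbitrary on free ones), extend $G$-equivariantly to all vertices and then affinely over edges; the Lipschitz bound comes from the finiteness of edge orbits, and surjectivity from minimality of $B$.

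For the uniqueness part your reasoning is also correct, but the malnormality detour is unnecessary. Since edge stabilisers in $B$ are trivial, any non-trivial subgroup of $G$ fixes at most one point of $B$; as $H_i$ fixes both $w_i$ and $f(v)$ (the latter by equivariance), these two points coincide. So the ``non-routine ingredient'' you flagged is already encoded in the tree $B$ itself, and no separate appeal to the free-product structure is needed.
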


Now we will prove that every $\mathcal{O}$-map is a quasi-isometry. But firstly, we need a technical lemma:

\begin{lemma}\label{inclusion}
Let $T \in \mathcal{O}$ and $v$ be a vertex of $T$. Then the inclusion map $\iota$ from the $G$-orbit of $v$, $A= G \cdot b$ to $T$ is a quasi-isometry. As a consequence, any projection $p$ from $T$ to $A$ is again a quasi-isometry.
\end{lemma}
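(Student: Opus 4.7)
Equip $A = G\cdot v$ with the restriction of the tree metric $d_T$. Then the inclusion $\iota : A \hookrightarrow T$ is tautologically an isometric embedding, hence a $(1,0)$-quasi-isometric embedding; the only thing to check is that $A$ is coarsely dense in $T$, i.e.\ that there exists $D > 0$ with $d_T(x, A) \leq D$ for every $x \in T$. The entire argument will rest on the co-compactness of the $G$-action built into the definition of $\mathcal{O}$.

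More precisely, since the quotient $T/G$ is a finite graph of groups, $T$ has only finitely many $G$-orbits of vertices, with representatives $w_1,\dots,w_k$, and only finitely many $G$-orbits of edges, whose lengths are bounded by some $L>0$. Put $D_0 = \max_{i} d_T(w_i, A)$. Because $A$ is $G$-invariant, for any vertex $w' = g w_i$ we have $d_T(w', A) = d_T(g w_i, g A) = d_T(w_i, A) \leq D_0$; and any point $x\in T$ lies within $L/2$ of some vertex. Hence $d_T(x,A)\leq D := D_0 + L/2$ for all $x\in T$, which establishes coarse density and finishes the proof that $\iota$ is a quasi-isometry.

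For the second claim, let $p : T \to A$ be \emph{any} map satisfying $d_T(x, p(x)) \leq D$ for all $x$ (one can take $p(x)$ to be a closest point of $A$ to $x$, with ties broken arbitrarily; the bound $D$ just obtained guarantees such choices exist). For $x, y \in T$ the triangle inequality gives
\[
d_T(p(x), p(y)) \leq d_T(x, y) + 2D \quad \text{and} \quad d_T(x, y) \leq d_T(p(x), p(y)) + 2D,
\]
so $p$ is a $(1, 2D)$-quasi-isometric embedding; since its image is all of $A$ it is a quasi-isometry.

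\textbf{Expected obstacle.} There is no serious obstacle: the statement is essentially the Švarc--Milnor principle packaged for this setting, and the only minor subtlety is that the orbit $A$ is not convex in $T$, so nearest-point projection need not be canonical. This is harmless because quasi-isometry is insensitive to bounded choices, as used above.
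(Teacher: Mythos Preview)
Your proof is correct and follows essentially the same approach as the paper: the inclusion is an isometric embedding, and coarse density of the orbit follows from co-compactness of the $G$-action (finiteness of the quotient graph). Your version is in fact more careful than the paper's---you make the bound explicit by treating vertices and interior points of edges separately, and you spell out the $(1,2D)$ quasi-isometry estimate for the projection, whereas the paper simply invokes compactness of the quotient and leaves the projection claim as an immediate consequence.
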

\begin{proof}
It is obvious that the inclusion map is $1-1$ and satisfies that $d_A (x, y) = d_T(x,y)$. So it remains to show that $\iota$ is quasi- onto, which means that there is some $M$ s.t. for every $x \in T$ there is some $g \in G$ with $d_T(x, g v) \leq M$. This follows from the fact that the quotient $\Gamma = G / T$ is compact and therefore we can choose $M$ to be the maximum distance in $\Gamma$ between the projection of $v$ and the other vertices. Therefore the result follows.
\end{proof}

\begin{lemma}\label{quasi-isometry}
Let $T, S \in \mathcal{O}$ and $f : T \rightarrow S$ be an $\mathcal{O}$-map. Then $f$ is a quasi-isometry.
\end{lemma}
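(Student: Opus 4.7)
The upper Lipschitz bound $d_S(f(x), f(y)) \leq L_f\, d_T(x,y)$ is immediate from the definition of an $\mathcal{O}$-map, and quasi-surjectivity is automatic because $f$ is surjective. The heart of the proof is therefore to establish a linear lower bound of the form $d_S(f(x), f(y)) \geq K^{-1} d_T(x,y) - C$.

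My plan is to exploit the symmetry between $T$ and $S$ through a reverse $\mathcal{O}$-map, combined with the cocompactness of the $G$-action. By Lemma \ref{O-maps} applied in the opposite direction, there exists an $\mathcal{O}$-map $h: S \to T$. The composition $h \circ f : T \to T$ is then $G$-equivariant, Lipschitz, and in particular continuous. I will consider the displacement function $\delta(x) := d_T(x, h(f(x)))$; it is continuous on $T$, and it is $G$-invariant because for any $g \in G$,
\[
d_T(gx,\, h(f(gx))) \;=\; d_T(gx,\, g \cdot h(f(x))) \;=\; d_T(x,\, h(f(x))).
\]
Since the $G$-action on $T$ is minimal and cocompact, the underlying space of the quotient graph of groups is a finite graph and hence compact. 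Therefore $\delta$ descends to a continuous function on a compact space, and is bounded by some constant $M$.

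The lower bound then drops out of the triangle inequality: for any $x, y \in T$,
\[
d_T(x,y) \;\leq\; \delta(x) + d_T(h(f(x)), h(f(y))) + \delta(y) \;\leq\; 2M + L_h\, d_S(f(x), f(y)),
\]
which rearranges to $d_S(f(x), f(y)) \geq L_h^{-1} d_T(x,y) - 2M/L_h$. Combined with the upper Lipschitz bound and surjectivity, this shows that $f$ is a $(\max(L_f, L_h),\, 2M/L_h)$-quasi-isometry. The one mildly delicate point is the cocompactness argument that bounds $\delta$, and that is the main obstacle. If one prefers to lean more directly on Lemma \ref{inclusion}, an alternative route is to fix a vertex $v \in T$, set $w = f(v)$, and study the $G$-equivariant restriction $f|_{G \cdot v}: G \cdot v \to G \cdot w$: by Lemma \ref{inclusion} both orbits are quasi-isometric to $T$ and $S$ respectively, so it suffices to check the restriction is a quasi-isometry, which follows since both induced left-invariant orbit metrics are quasi-isometric to any word metric on $G$ for a finite generating set.
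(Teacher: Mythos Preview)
Your main argument via the displacement function $\delta(x)=d_T(x,hf(x))$ is correct: $h\circ f$ is $G$-equivariant, so $\delta$ is $G$-invariant and continuous, and the quotient $T/G$ is a finite metric graph, hence compact; the triangle-inequality computation then gives the desired lower bound. This is a genuinely different route from the paper's proof. The paper fixes a vertex $v$, restricts $f$ to the orbit $A=G\cdot v$, and uses the second clause of Lemma~\ref{O-maps} (any two $\mathcal{O}$-maps agree on the non-free vertices) to see that a reverse $\mathcal{O}$-map $h$ inverts $f|_A$ exactly; then $f|_A$ is bi-Lipschitz, and Lemma~\ref{inclusion} finishes. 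Your argument avoids Lemma~\ref{inclusion} entirely and never needs the special behaviour on non-free vertices, at the price of invoking compactness of the quotient directly; the paper's argument is more combinatorial but tacitly requires choosing $v$ non-free. Both are short, and yours is arguably cleaner.

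One caution about your alternative route: you claim the orbit metrics are quasi-isometric to ``any word metric on $G$ for a finite generating set'', but in this paper $G$ is only assumed to have finite Kurosh rank, not to be finitely generated (the $H_i$ may be infinitely generated). So that sentence does not go through as written. The paper's version of this orbit argument sidesteps the issue by producing an explicit Lipschitz inverse on the orbit rather than appealing to a word metric on $G$. Since your primary argument already stands on its own, you could simply drop the alternative paragraph, or rephrase it to use $h|_B$ as the quasi-inverse of $f|_A$ directly.
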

\begin{proof}
Let choose some vertex $v \in T$, then $f$ induces a Lipschitz map from $A = G v$ to $B = G f(v) $. Note also that by the construction of $\mathcal{O}$-maps there is an $\mathcal{O}$ - map $h$ from $S$ to $T$ which is the inverse function of $f_{|A}$ restricted to $B$ and it is again Lipschitz. Therefore $f_{|A}$ is an isomorphism between $A$ and $B$ (and in particular quasi-isometry).\\
Using now the lemma \ref{inclusion} and the fact that the inverse of a quasi-isometry is a quasi-isometry we get that:
$T \xrightarrow {p} A \xrightarrow{f} B \xrightarrow{q} S $, where $p$ is the projection of $T$ to $A$ and $\iota$ is the  inclusion map, where the maps $p,q,f_{|A}$ are quasi-isometries, and it follows that $f$ is a quasi- isometry.
\end{proof}

Using the lemma \ref{quasi-isometry} and the existence of $\mathcal{O}$-maps between every two elements of $\mathcal{O}$ (see \ref{O-maps}), we get that:
\begin{prop}\label{trees q.i.}
	Let $T, S \in \mathcal{O}$, then we have that the metric trees $T$ and $S$ are quasi -isometric.
\end{prop}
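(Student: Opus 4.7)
The statement is essentially an immediate corollary of the two lemmas just proven, so my plan is to assemble them rather than do new work.

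First, I would invoke Lemma \ref{O-maps}, which guarantees the existence of an $\mathcal{O}$-map $f : T \rightarrow S$ between any two elements of $\mathcal{O}$. This gives us the candidate map realising the quasi-isometry. Then I would apply Lemma \ref{quasi-isometry} directly to $f$ to conclude that $f$ is a quasi-isometry $T \to S$, and therefore $T$ and $S$ are quasi-isometric as metric spaces.

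The only subtlety worth noting in the write-up is that quasi-isometry is a symmetric relation on metric spaces, so one does not need to produce an $\mathcal{O}$-map in the opposite direction: the fact that $f$ itself is a quasi-isometry already gives a quasi-inverse as a map of metric spaces (even though that quasi-inverse need not be an $\mathcal{O}$-map, i.e.\ need not be $G$-equivariant Lipschitz surjective). Since the proposition only asks about quasi-isometry of the underlying metric trees (not about $G$-equivariant quasi-isometry), this is enough.

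There is no real obstacle here; the work was already done in Lemma \ref{quasi-isometry}, whose proof factored an $\mathcal{O}$-map as $T \xrightarrow{p} A \xrightarrow{f_{|A}} B \xrightarrow{q} S$ through the non-free vertex orbits via Lemma \ref{inclusion}. The proposition is just the contentless packaging of ``$\mathcal{O}$-maps exist'' $+$ ``$\mathcal{O}$-maps are quasi-isometries,'' and the proof will be a two-line citation of \ref{O-maps} followed by \ref{quasi-isometry}.
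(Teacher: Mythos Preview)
Your proposal is correct and matches the paper's own argument exactly: the paper states the proposition immediately after the sentence ``Using the lemma \ref{quasi-isometry} and the existence of $\mathcal{O}$-maps between every two elements of $\mathcal{O}$ (see \ref{O-maps}), we get that:'', which is precisely your two-line assembly of those lemmas. There is nothing to add.
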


\underline{\textbf{Topological representatives:}} It is very useful to see an outer automorphism as a map between a tree $T \in \mathcal{O}$. More specifically:
\begin{defin}
Let $\Phi \in Out(G, \mathcal{O})$ and $T \in \mathcal{O}$, then we say that a Lipschitz surjective map $f : T \rightarrow T$ \textbf{represents}
$\Phi$ if for any $g \in G$ and $t \in T$ we have $f(gt) = \Phi(g)(f(t))$. In other words, $f$ is an $\mathcal{O}$-map from $T$ to $\Phi(T)$.
\end{defin}

Applying again \ref{O-maps} (the existence of $\mathcal{O}$-maps), we get:

\begin{lemma}
Let $\Phi \in Out(G, \mathcal{O})$ and $T \in \mathcal{O}$. Then there is a (simplicial) topological representative of $\Phi$ in $T$.
\end{lemma}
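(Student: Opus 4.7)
The plan is to reduce the statement to Lemma \ref{O-maps} by transferring the problem into a question about $\mathcal{O}$-maps between two distinct elements of $\mathcal{O}$, and then to upgrade the resulting map to a simplicial one by a standard subdivision argument.

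First I would pick a lift $\phi \in Aut(G,\mathcal{O})$ of $\Phi$ and form the twisted tree $\phi(T)$ as described just before the remark on $Hyp(\mathcal{O})$: the underlying metric tree is $T$, but the action is $g \ast x = \phi(g)x$. I would verify that $\phi(T)$ still belongs to $\mathcal{O}$, namely that the action remains minimal with trivial edge stabilisers and that the non-free vertex stabilisers are precisely (conjugates of) the $H_i$'s. Minimality and triviality of edge stabilisers are immediate from the fact that $\phi$ is an automorphism, and the condition on vertex stabilisers is exactly the defining property $\phi(H_i)= g_i H_{\sigma(i)} g_i^{-1}$ of $Aut(G,\mathcal{O})$; after possibly composing $\phi$ with an inner automorphism (which does not change $\Phi$) we obtain a tree in $\mathcal{O}$.

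Next I would apply Lemma \ref{O-maps} to the pair $T,\phi(T) \in \mathcal{O}$ to obtain an $\mathcal{O}$-map $f : T \to \phi(T)$. By definition this $f$ is $G$-equivariant with respect to the two actions, so viewed as a self-map of the underlying tree $T$ it satisfies
\[
f(gt) \;=\; g \ast f(t) \;=\; \phi(g)\,f(t)
\]
for all $g \in G$ and $t \in T$. Lipschitz-continuity and surjectivity come directly from the definition of an $\mathcal{O}$-map, so $f$ already fulfils every analytic requirement of a topological representative of $\Phi$.

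The only remaining point is to make $f$ simplicial. The idea I would use is to leave $f$ unchanged on the (finitely many orbits of) vertices of $T$ and then, on each orbit of edges $e$, replace $f|_e$ by an affine parametrisation of the tightened edge path $[f(e)]$ joining $f(\iota e)$ to $f(\tau e)$; one may first subdivide $e$ so that the endpoints of the resulting edges map to vertices of $T$. This procedure is compatible with the $G$-action because we perform it on one representative per orbit and extend equivariantly, and it preserves the Lipschitz and surjectivity properties (in fact it can only reduce the Lipschitz constant, since tightening is distance non-increasing on reduced paths). The modified map coincides with $f$ on non-free vertices, so by the uniqueness clause of Lemma \ref{O-maps} the equivariance property is preserved. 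The mild technical obstacle is precisely this last step: one must be careful that the subdivision and the redefinition on edges yield a globally well defined $G$-equivariant simplicial map, and this is handled by choosing a fundamental domain for the $G$-action on $T$, performing the construction there, and extending by equivariance.
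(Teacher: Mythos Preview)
Your proposal is correct and follows essentially the same approach as the paper: the paper simply states that the lemma is obtained by ``applying again \ref{O-maps} (the existence of $\mathcal{O}$-maps)'', since by definition a topological representative of $\Phi$ is precisely an $\mathcal{O}$-map $T\to\Phi(T)$. You supply the details the paper omits --- checking $\phi(T)\in\mathcal{O}$ and the subdivision/tightening step to make the map simplicial --- but the core idea is identical.
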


The topological representatives many times produce paths which are not reduced, and then we have cancellation in their images. Therefore we have to define a map (induced by $f$) from the reduced edge-paths of $T$ to itself,  and we denote it by $f_{\#}$, by the rule $f_{\#} (w) = [f(w)]$ for every edge-path $w$ of $T$.
\\

As these maps represent an outer automorphism $\Phi$, if we change the tree $T$ with $\iota_h (T)$ where $\iota_h \in Inn(G) $ is just the conjugation by some $h \in G$, we get an other $\mathcal{O}$-map that still represents $\Phi$. Therefore each regular automorphism $\phi \in \Phi$ corresponds to some topological representative. In particular, for a topological representative $f : T \rightarrow T$ of an automorphism $\Phi$, and for every $\phi \in \Phi$, changing appropriately the tree $T$ with some $\iota_h (T)$, we can choose $f$ to satisfy $\phi(g) f = f g$, for every $g \in G$. We say that $f, \phi$ are \textbf{mated}. Note that in this case, for $g \in G$ (as we can see $g$ as an isometry of $T$):\\
\begin{remark}
$g \in Fix(\phi)$ if and only $g$ and $f$ commute.
\end{remark} 
\subsection{N-periodic Paths}
Here we will define the notion of an $N$ - path. See more about the properties of $N$-periodic paths in \cite{syrigos2014irreducible}.
A difference between the free and our case is that it is not always true that there are finitely many orbits of paths of a specific length (if there are non-free vertices with infinite stabiliser), but it is true that there are finitely many paths that have different projections in the quotient $G / T$. Therefore the notion of an N- path (we define it below) plays the role of a Nielsen path. Note that here if $h: S \rightarrow S$, we say that a point $x \in S$ is $h$-periodic, if there are $g \in G$ and some natural $k$  s.t. $h^{k}(x) = gx$.

\begin{defin}
	\begin{enumerate}
		\item Two paths $p, q$ in $S \in \mathcal{O}$ are called \textit{equivalent}, if they project to the same path in the quotient $G/S$. In particular, their endpoints $o(p), o(q)$ and $t(p), t(q)$ are in the same orbits, respectively.
		\item Let $h : S \rightarrow S$ be a representative of some outer automorphism $\Psi$, let $p$ be a path in $S$ and let's suppose that the endpoints of $p$ and $h(p) $ are in the same orbits (respectively), then we say that a path $p$ in $S$ is \textit{N-path} (relative to $h$), if the paths $[h(p)], p$ are equivalent.
	\end{enumerate}
\end{defin}

In the free case, we need representatives of outer automorphisms for which we can control the number of Nielsen paths. For example the notion of stable and appropriate train track representatives. As we will see, we can define the corresponding notions in our case but using N-paths.

\subsection{Relative Boundary}
Let's fix some relative outer space $\mathcal{O}$ with respect to a some fixed free product decomposition of $G$. We will give the definition of the relative boundary relative to $\mathcal{O}$.\\
For every $T \in \mathcal{O}$, we can use the Gromov hyperbolic boundary $\partial T$, as $T$ is a $0$-hyperbolic space (or a tree), by defining it as the set of equivalence classes of sequences of points in $T$ that converge to infinity with respect to the Gromov product (with respect to some fixed base point $p$).
However, it is more convenient for our purposes to define it as the set of lines passing through a base point $x \in T$. The two definitions coincide in the case of a proper (i.e. the closed balls are compact) hyperbolic metric space. But in the case of trees, we don't need the properness. For more details about the Gromov Boundary, see the very interesting survey for boundaries of hyperbolic spaces \cite{kapovich2002boundaries}. \\
More specifically, for any two lines $\ell, \ell'$ starting from $x \in T$, we define the equivalence relation by $\ell \equiv \ell ' $ iff $\ell, \ell '$ have an infinite common subline. Now we denote the boundary by $\partial _x T = \{ [\ell] | \ell : [0,\infty) \rightarrow T$ is a geodesic ray with $\ell (0) = x \}$.  It is not difficult to see that this definition does not depend on the base point and so we will usually omit the base point from the notation.\\
We can also define the $r$ neighbourhood of a point $r$ in the boundary, as $V (p, r) = \{ q \in \partial _x	T |$ for any geodesic rays $\ell _1, \ell_2$ starting at $x$ and with $[\ell_1] = p , [\ell_2] = q$ we have $\liminf \limits_{n \rightarrow \infty}	|\ell _1(t) \wedge \ell_2(t)| \geq r \}$.\\
Let $p, q \in V(T)  \cup  \partial T$, we define the operation $\wedge$ as follows: $p \wedge q$ is the common initial subpath (starting from $x$) of the unique edge paths $[x,p], [x,q]$ that connect $p,q$ with the base point $x$.

We now topologise $\partial  T$ by setting the basis of neighborhoods for any $p \in \partial
T$ to be the collection $\{ V (p, r) | r \geq 0 \}$.
Moreover, this topology is metrisable and in particular, the metric on $\partial T$ is given by $d(p, q) = e^{ - |[x,p] \wedge [x,q]|}$ for $p, q \in \partial T$ (where $e^{-  \infty} = 0$).\\
It is not difficult to see that any quasi-isometry $f: T \rightarrow S$, induces a homeomorphism  between the boundaries $\partial T, \partial S$, as constructed. In particular, since any $\mathcal{O}$-map $f: T \rightarrow S$ is a quasi-isometry, it can be extended to the boundary and it induces a well defined homeomorphism, which we denote by $\partial f : 
\partial T \rightarrow \partial S$. Therefore we get that:
\begin{lemma}
	Let $T,S \in \mathcal{O}$. Then $\partial T$ is homeomorpic to $\partial S$.
\end{lemma}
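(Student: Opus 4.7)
The plan is to string together the three pieces of machinery that have just been assembled, so the proof should be quite short. First I would invoke Lemma \ref{O-maps} to produce an $\mathcal{O}$-map $f : T \rightarrow S$; this is the only place where the specific structure of the relative outer space $\mathcal{O}$ enters the argument. Next I would appeal to Lemma \ref{quasi-isometry} to conclude that $f$ is a quasi-isometry between the underlying metric trees.

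The second step is the extension to the boundary. Since $T$ and $S$ are simplicial metric trees, they are $0$-hyperbolic in the sense of Gromov. The standard fact from coarse geometry (see, e.g., the survey \cite{kapovich2002boundaries}) says that any $(K,C)$-quasi-isometry between Gromov hyperbolic geodesic spaces extends continuously to a homeomorphism between their Gromov boundaries; applied here it yields a homeomorphism $\partial f : \partial T \rightarrow \partial S$. In fact this has already been asserted in the paragraph preceding the lemma, so the argument reduces to packaging this together with the first two steps.

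If I wanted to avoid citing the general quasi-isometry/boundary theorem, I could instead work directly in the concrete model chosen above: given a geodesic ray $\ell$ in $T$ based at $x$, form $f \circ \ell$ and tighten to obtain an edge path in $S$; using the Lipschitz property together with the quasi-inverse supplied in the proof of Lemma \ref{quasi-isometry} (and the fact that in trees there is no ambiguity in tightening), one checks that this assignment respects the equivalence relation of sharing an infinite common subline, and that the induced map on equivalence classes is continuous with continuous inverse in the metric $d(p,q) = e^{-|[x,p]\wedge[x,q]|}$.

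The only potential obstacle worth flagging is making sure the boundary is defined independently of the tree (so that the statement ``$\partial T$ is homeomorphic to $\partial S$'' is not tautological through a common model); since the definition $\partial_x T$ was given intrinsically in terms of rays in $T$, this is not an issue, but one should remark that the homeomorphism $\partial f$ does depend on the choice of $\mathcal{O}$-map $f$, and different choices yield different identifications, all of them homeomorphisms.
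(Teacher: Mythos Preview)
Your proposal is correct and follows precisely the argument the paper gives: the lemma is stated immediately after the paragraph observing that any $\mathcal{O}$-map is a quasi-isometry and hence extends to a boundary homeomorphism, so the paper's implicit proof is exactly your chain Lemma~\ref{O-maps} $\Rightarrow$ Lemma~\ref{quasi-isometry} $\Rightarrow$ boundary extension. Your additional remarks (the direct tightening argument and the caveat about dependence on the choice of $f$) go beyond what the paper records but are accurate and harmless.
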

Note that in our case, if there is some infinite $H_i$ it is easy to see that $\partial T$ is not compact in the metric topology. For example, if we have a point of infinite valence we can produce a sequence of lines that they have constant distance between each other. Therefore we have a sequence in $\partial T$, which has not converging subsequence.
However, it is possible to find other interesting topologies for which $T \cup \partial T$ is compact. For example, see \cite{coulbois2007non} for the observers' topology.

We can also define the set $\partial  (G, \mathcal{O})$ of infinite reduced words with respect to the free product length which is induced by our fixed free product decomposition.  For any $A, B \in G  \cup  \partial (G, \mathcal{O})$, we define the operation $\wedge$ as follows: $A \wedge B$ is the longest common initial subword of $A,B$. It is easy to see that the map $d(A, B) = e^{-|A \wedge B|}$, for $A \neq B$ and $d(A,A) =0$ is a metric on the space $G \cup \partial G$. Finally, since any $\phi \in Aut(G, \mathcal{O})$ can be seen as a quasi-isometry of $G$, we have that it induces a homeomorphism of $\partial (G, \mathcal{O})$ which we denote by $\partial \phi$.
Note that the two notions of the boundary can be identified, in particular:
\begin{lemma}
Let $T \in \mathcal{O}$. Then $\partial T$ is homeomorphic to $\partial (G, \mathcal{O})$.
\end{lemma}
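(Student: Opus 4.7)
The plan is to realize the two objects as the Gromov boundaries of two quasi-isometric spaces -- namely $T$ and $G$ equipped with its free product length metric -- and to invoke the preceding remark that any quasi-isometry of $0$-hyperbolic spaces extends canonically to a homeomorphism of boundaries. Fix a free vertex $x \in T$, so that $\mathrm{Stab}_G(x)$ is trivial and the orbit map $g \mapsto gx$ is a bijection $G \to Gx \subset T$. By Lemma \ref{inclusion}, the inclusion $Gx \hookrightarrow T$ is a quasi-isometry, so it suffices to show that the orbit map is itself a quasi-isometry from $G$ (with the free product length) to $(Gx, d_T)$. For each letter $g$ of the free product structure -- either a non-identity element of some $H_i$ or a $\pm 1$ power of a basis element of $F_n$ -- the displacement $d_T(x, gx)$ is bounded above and below by positive constants: there are only finitely many orbits of edges and non-free vertices in $T$, hence only finitely many orbits of elementary displacements, all of them positive since edge stabilisers are trivial. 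Given a reduced word $w = g_1 \cdots g_k$, the concatenation of the geodesic sub-segments from $g_1 \cdots g_{i-1} x$ to $g_1 \cdots g_i x$ traces out a path in $T$ without backtracking, and hence $d_T(x, wx)$ is linearly comparable to the free product length of $w$.

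Composing with the inclusion $Gx \hookrightarrow T$ yields a quasi-isometry from $G$ (with the free product length) to $T$, which by the remark recalled just before the statement of the lemma extends to a homeomorphism of Gromov boundaries. It remains only to identify this Gromov boundary with $\partial(G, \mathcal{O})$: a sequence $(w_n)$ in $G$ converges to infinity in the Gromov-product sense exactly when the reduced-word representations of $w_n$ stabilise letter-by-letter to an infinite reduced word, and equivalence classes of such sequences are precisely the points of $\partial(G, \mathcal{O})$. The metric $d(A, B) = e^{-|A \wedge B|}$ defined in the paper is a standard visual metric on this boundary with basepoint $1 \in G$, so the resulting bijection $\partial(G, \mathcal{O}) \to \partial T$ is indeed a homeomorphism.

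The main technical obstacle is the non-backtracking claim in the first paragraph: that a reduced word in the free product structure of $G$ corresponds to a non-backtracking edge-path in $T$. This is the Bass--Serre-type property that the $\mathcal{O}$-trees faithfully reflect the free product decomposition, with each non-free vertex orbit corresponding to a distinct factor $H_i$ and with trivial edge stabilisers; any return to a previously visited non-free vertex along the same edge would force two consecutive letters of $w$ to belong to the same factor and to fold into a single letter of the free product, contradicting reducedness of $w$. Once this comparison is in place, the remainder is a routine unwinding of the definitions of the visual metric and the Gromov product.
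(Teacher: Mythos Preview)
Your overall strategy is reasonable, but the central non-backtracking claim is false for an arbitrary $T\in\mathcal{O}$ and arbitrary free vertex $x$, and your justification in the last paragraph only addresses backtracking at \emph{non-free} vertices. Concretely, take $G=H_1*H_2*\langle b\rangle$ and a tree $T$ whose quotient consists of free vertices $\bar x,\bar z$, non-free vertices $\bar v_1,\bar v_2$, edges $\bar x\text{--}\bar z$, $\bar z\text{--}\bar v_1$, $\bar z\text{--}\bar v_2$, and the $b$-loop based at $\bar x$. For non-trivial $h_i\in H_i$ the geodesic $[x,h_1x]$ ends with $h_1z\to h_1x$, while the geodesic $[h_1x,h_1h_2x]$ begins with $h_1x\to h_1z$; the concatenation therefore backtracks at the free vertex $h_1x$ even though $h_1h_2$ is a reduced word. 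More generally, whenever the geodesics in the quotient from $\bar x$ to two distinct non-free vertices share an initial edge, the concatenated path for the corresponding pair of elliptic letters will fold there. Thus the inequality $d_T(x,wx)\geq c\cdot|w|$ is \emph{not} established by your argument; the orbit map is still a quasi-isometry, but a finer cancellation analysis (or an appeal to the bounded cancellation lemma) is needed, and the naive bound ``cancellation at each junction is at most a fixed constant'' is not by itself sufficient, since that constant can exceed the minimal single-letter displacement. (A secondary issue: not every $T\in\mathcal{O}$ has a free vertex, though this is easily remedied by subdividing an edge.)

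The paper avoids all of this by not working with an arbitrary $T$. It fixes the specific tree $S\in\mathcal{O}$ covering the graph with a single free vertex adjacent to every non-free vertex (edges of length $\tfrac12$) and carrying the $n$ free-generator loops (length $1$). In $S$ the basepoint \emph{is} adjacent to every $v_i$, so reduced words correspond bijectively and isometrically to reduced edge paths from the basepoint, and $\partial S\cong\partial(G,\mathcal{O})$ is immediate. The homeomorphism $\partial T\cong\partial S$ for general $T$ then follows from the lemma proved just before, which in turn rests on Proposition~\ref{trees q.i.}. In effect the paper factors the problem as $\partial T\cong\partial S\cong\partial(G,\mathcal{O})$, doing the delicate word-to-path comparison only in the one tree where it is trivial; your argument attempts the comparison directly in $T$, which is where the gap appears.
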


\begin{proof}
Consider the universal cover $S$ of the rose of $m$ cycles with $n$ edges attached, corresponding to the free product decomposition $G =  H_{1} \ast ...\ast H_{m} \ast F_{n}$ with length of edges corresponding to the $n$ simple loops to be $1$ and of the rest of edges to have length $1/2$. It is easy to see now that of an edge path starting from a base point $v$ (it can be chosen to be the lift of the unique free vertex of the quotient) correspond to a word in $G$ (and vice versa) and the length of the edge path is exactly the free product length of the word. Moreover, the lines starting from the base point correspond to infinite reduced words of $G$ with respect to the free product length. Therefore there is a bijection from the set $\partial S$ of lines of $S$ starting from $v$ to $\partial (G, \infty)$. Since the metrics are the similar, it is easy to see that this map is a actually a homeomorphism.
 But now for every $T \in \mathcal{O}$, we have that $\partial T$ is homeomorphic to $\partial S$ and the lemma follows.
\end{proof}

Note also that since $\partial T, \partial (G, \mathcal{O})$ are homeomorphic, we can identify $\partial f$ and $\partial \phi$.

\subsection{Rational and non-Rational Points}

For every hyperbolic element $g$ of $\mathcal{O}$, the sequence of elements $g^{k}$ have arbitrarily large (free product) length and so it has a limit in the relative boundary  $\partial (G, \mathcal{O})$ , which we denote it by $g^{\infty}$. We can also define $g^{- \infty}$ as $(g^{-1}) ^{\infty}$.
\\
If $g \in G$, we denote by $\iota _u$ the inner automorphism of $G$ given by $\iota _{u} (g) = u g u^{-1}$, for every $g \in G$. If $u \in Hyp(\mathcal{O})$ , it is easy to see that then $\partial \iota _u$ fixes exactly two points of the relative boundary $\partial (G, \mathcal{O})$ and more specifically the points $u ^{\infty}  , u ^{- \infty}$. Note that since edge stabilisers of elements of $\mathcal{O}$ are trivial, for an elliptic element $u$ then the inner automorphism $\iota _u$ cannot fix a point of the boundary.\\
We say that infinite words of the form  $u ^{\infty}  , u ^{- \infty}$ for a hyperbolic element $u$, are \textbf{rational points} of the boundary. Alternatively, we could define the rational points as the fixed points of inner automorphisms corresponding to hyperbolic elements.

\begin{prop}\label{injective}
If $X \in \partial  (G, \mathcal{O})$ is not a rational point, then the restriction of the quotient map $Aut(G, \mathcal{O}) \rightarrow Out(G, \mathcal{O})$ to $Stab(X)$ is injective.
\end{prop}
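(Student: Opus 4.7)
The kernel of $Aut(G, \mathcal{O}) \to Out(G, \mathcal{O})$ is $Inn(G)$, so injectivity of the restriction to $Stab(X)$ is equivalent to the statement that for every $u \in G$, if $\iota_u \in Stab(X)$ then $u = e$ (hence $\iota_u = \mathrm{id}$). The plan is to prove this last implication by transporting the condition $\partial \iota_u(X) = X$ into a tree-theoretic statement and then splitting on the type of the isometry $u$.

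The first step is to fix any $T \in \mathcal{O}$ and identify $\partial T$ with $\partial(G, \mathcal{O})$ as in the preceding lemma. I would then observe that the isometry $f_u : T \to T$, $x \mapsto ux$, is a topological representative of $\iota_u$: indeed, $f_u(gx) = ugx = (ugu^{-1})(ux) = \iota_u(g) f_u(x)$. Consequently $\partial \iota_u$ coincides with the boundary extension of the translation $u$ on $T$, and the condition $\iota_u \in Stab(X)$ is equivalent to saying that $u$, as an isometry of $T$, fixes the end of $T$ corresponding to $X$.

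The second step is a dichotomy on $u$. If $u$ is hyperbolic, the only ends of $T$ fixed by $u$ are the two ends $u^{\infty}$ and $u^{-\infty}$ of its axis, both of which are rational by definition, contradicting the hypothesis on $X$. If $u$ is elliptic and non-trivial, triviality of edge stabilisers in $\mathcal{O}$ forces its fixed set in $T$ to be a single vertex $v$ (necessarily a non-free vertex, since free vertices have trivial stabiliser). The unique geodesic ray from $v$ towards $X$ is then preserved setwise by $u$, and since $u$ is an isometry fixing $v$, it must fix the ray pointwise; in particular it stabilises the first edge of the ray, so $u = e$ again by the triviality of edge stabilisers. The only technical point to check with any care is that $x \mapsto ux$ genuinely represents $\iota_u$ (so that $\partial \iota_u$ agrees with the tree-theoretic action of $u$ on $\partial T$); once that identification is in place the remainder is a clean case analysis on isometries of trees with trivial edge stabilisers, with no substantial obstacle.
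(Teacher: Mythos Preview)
Your proof is correct and follows essentially the same route as the paper: reduce to showing no non-trivial $\iota_u$ fixes $X$, then split into the hyperbolic case (where $X$ would have to be $u^{\pm\infty}$, hence rational) and the elliptic case (ruled out by triviality of edge stabilisers). The only difference is cosmetic: the paper records the elliptic case as a remark just before the proposition (``for an elliptic element $u$ the inner automorphism $\iota_u$ cannot fix a point of the boundary''), whereas you spell out the ray-fixing argument explicitly inside the proof.
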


\begin{proof}
Let $T \in \mathcal{O}$ and let's assume that $\phi \in Aut(G, \mathcal{O})$.  Suppose  also that $X$ is a fixed point of $\partial  \phi$.\\
Let $u$ be a non-trivial of $\mathcal{O}$ and suppose that $X$ is fixed by $\partial (i_u \circ \phi)$, which implies that it is a fixed point of $\partial i_u$.  As a consequence, $u$ is a  hyperbolic element and in particular $X$ it is rational point or equivalently the axis of the hyperbolic element $u$, and so $X = u ^{\infty}$ or $X = u^{ - \infty}$, which leads us to a contradiction.
\end{proof}

\subsection{Regular and Singular Fixed Points}
For an automorphism $\phi \in Aut(G, \mathcal{O})$, we denote by $Fix \phi $ the fixed subgroup of $\phi$: $Fix \phi = \{ g \in G | \phi(g) = g \}$. Since by \cite{collins1994efficient}, 
we have that $Fix \phi$ has finite Kurosh rank, its (relative) boundary $\partial Fix \phi$ embeds into $\partial (G, \mathcal{O})$ and it is actually a subgroup of $Fix \partial \phi$ of fixed infinite words by $\partial \phi$. We will use the same terminology as in the free case and we distinguish two cases for infinite fixed words of an automorphism $\phi \in Aut(G,\mathcal{O})$ i.e. the elements of $Fix \partial \phi$ : either it belongs to $\partial Fix \phi$ and then it is called \textbf{singular}, or otherwise it is called \textbf{regular}.\\
For the singular fixed points, the are two subcases. We use the notion topologically attractive (and repulsive) fixed points as in \cite{martino1998automorphisms}, but there is also the a metric notion. These definitions are different in the free product case, while they coincide in the free case. For more details, see \cite{martino1998automorphisms}.\\
We say that a fixed point $X$ of $\partial \phi$ \textbf{attractive}, if there is an integer $N$ s.t. if $|Y \wedge X| \geq N$, then $\lim\limits_{n \rightarrow \infty} \phi ^{n} (Y) = X$. A fixed point $X$ is said to be \textbf{repulsive}, if it is attractive for $\partial \phi ^{-1}$.
A classification of fixed points of $\partial \phi$ has been proved in the proposition 5.1.14. of \cite{martino1998automorphisms} and more specifically:
\begin{prop}
Let $\phi \in Aut(G, \mathcal{O})$. A fixed point of $\partial \phi$ is :
\begin{itemize}
	\item either singular
	\item or attractive
	\item or repulsive
\end{itemize}
\end{prop}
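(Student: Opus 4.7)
The plan is to reduce to analysing a topological representative and then perform a dichotomy based on how iterates of $\phi$ affect initial subwords of $X$. First, I choose a tree $T \in \mathcal{O}$ and a Lipschitz topological representative $f : T \to T$ mated with $\phi$, so that under the identification $\partial T \cong \partial(G, \mathcal{O})$ the homeomorphism $\partial \phi$ coincides with $\partial f$. Writing $X$ as an infinite reduced word $y_1 y_2 y_3 \cdots$ in the fixed free product decomposition, the condition $\partial \phi(X) = X$ translates into $a(n) := |\phi(y_1 \cdots y_n) \wedge X| \to \infty$. I also fix the bounded cancellation constant $C$ for $f$, which exists because $f$ is a quasi-isometry by Lemma \ref{quasi-isometry}.

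If $X \in \partial Fix \phi$ then $X$ is singular by definition and there is nothing to prove, so assume henceforth that $X$ is regular. I would then examine the asymptotic behaviour of $a(n) - n$. There are three a priori possibilities: (i) $a(n) - n \to +\infty$, (ii) $a(n) - n \to -\infty$, or (iii) $a(n) - n$ stays bounded. In case (i), for any $Y$ with $|Y \wedge X|$ sufficiently large (depending on $C$), a bounded-cancellation comparison between $\phi(Y)$ and $\phi(y_1 \cdots y_{|Y \wedge X|})$ gives $|\phi(Y) \wedge X| > |Y \wedge X|$, and iterating yields $\phi^k(Y) \to X$, so $X$ is attractive. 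In case (ii), applying the same argument to $\phi^{-1}$ (with a representative mated with $\phi^{-1}$) shows that $X$ is repulsive.

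The main obstacle is ruling out case (iii) under the standing assumption that $X$ is regular. Here $\phi$ neither expands nor contracts the initial word of $X$ by more than a bounded amount, and the goal is to derive a contradiction with regularity. The idea is to exploit the finiteness of the quotient graph of groups $G \backslash T$: if $a(n) - n$ is bounded, then for infinitely many $n$ the combinatorial data consisting of the initial segment of $X$, the image under $f$ of the corresponding initial ray, and the discrepancy tail projects to the same configuration in $G \backslash T$. A pigeonhole argument together with the mating relation $\phi(g) f = f g$ should then produce a non-trivial $g \in G$ with $\phi(g) = g$ such that $g^{\infty}$ is cofinal with $X$. Since by the Collins--Turner theorem \cite{collins1994efficient} the fixed subgroup $Fix \phi$ has finite Kurosh rank, $\partial Fix \phi$ embeds into $\partial(G, \mathcal{O})$, so the cofinal ray forces $X \in \partial Fix \phi$, contradicting regularity. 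This pigeonhole-plus-lifting step is the most delicate part of the proof and essentially mirrors the corresponding step in Martino's proof of the free-group analogue in \cite{martino1998automorphisms}; the new ingredient in the free product setting is checking that the finiteness of the quotient graph of groups still supplies enough combinatorial rigidity even when the non-free vertices carry infinite stabilisers $H_i$.
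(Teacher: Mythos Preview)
The paper does not give its own proof of this proposition; it simply records it as Proposition~5.1.14 of \cite{martino1998automorphisms}. So there is no ``paper's proof'' to compare against, and your sketch is in effect an attempt to reconstruct Martino's argument.

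Your outline has the right shape, but there is a genuine logical gap at the case split. You assert that the three possibilities for $a(n)-n$ are: it tends to $+\infty$, it tends to $-\infty$, or it stays bounded. These alternatives are \emph{not} exhaustive: a priori $a(n)-n$ could be unbounded yet oscillate, or be unbounded in one direction without tending to a limit. In the classical free-group arguments (Cooper; Gaboriau--Jaeger--Levitt--Lustig) this is dealt with by first using bounded cancellation to show that the increments $a(n+1)-a(n)$ take only finitely many values, and then running a recurrence/pigeonhole argument that forces either boundedness or eventual drift in a fixed direction. That intermediate step is missing from your write-up and is exactly what makes the trichotomy legitimate.

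Two further points deserve care. First, the paper explicitly warns (just above the proposition) that in the free product setting the \emph{topological} and \emph{metric} notions of attractive/repulsive do not coincide, unlike the free case; your argument via growth of $|Y\wedge X|$ is aimed at the metric notion, so you should check which notion is actually being claimed and, if necessary, bridge the two. Second, in your bounded case the pigeonhole you want to run is on data living in the quotient $G\backslash T$, but the non-free vertex groups $H_i$ may be infinite, so the ``configuration'' is not obviously finite; you need to say precisely which finite invariant you are pigeonholing on (for instance, the projected edge-path in $G\backslash T$ together with the bounded value of $a(n)-n$), and then verify that matching indices really produce a $g\in Fix\phi$ whose axis end equals $X$ as a boundary point, not merely a $g$ with $g^{\infty}$ cofinal with $X$ --- the latter does not by itself place $X$ in $\partial Fix\phi$.
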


\subsection{Bounded Cancellation Lemma}

Let $T, T′ \in \mathcal{O}$ and $f : T \rightarrow T'$ be an $\mathcal{O}$- map. If we have a concatenation of paths $ab$, ever if $f(a) = f_{\#} (a)$ and $f(b) = f_{\#} (b)$, it is possible to have cancellation in $f(a)f(b)$. However, the cancellation is bounded above by some some $M$ which depends only on $f$ and not on $a,b$ . In particular,  we can define the bounded cancellation constant of $f$ (let's denote it $BCC(f)$) to be the supremum of all real numbers $N$ with the property that there exist $A, B, C$ some points of $T$ with $B$ in the (unique) reduced path between $A$ and $C$ such that $d_{T′}(f(B), [f(A), f(C)]) = N$ (the distance of $f(B)$ from the reduced path connecting $f(A)$ and $f(C)$ ), or equivalently is the lowest upper bound of the cancellation for a fixed $\mathcal{O}$-map.\\
The existence of such number is well known, for example a bound has given in \cite{horbez2014hyperbolic}:
\begin{lemma} \label{BCL}
	Let $T \in \mathcal{O}$, let $T′ \in \mathcal{O}$, and let $f : T \rightarrow T'$ be a Lipschitz map. Then $BCC(f) \leq Lip(f) qvol(T)$, where $qvol(T)$ the quotient volume of $T$, defined as the infimal volume of a finite subtree of $T$ whose $G$-translates cover $T$.
\end{lemma}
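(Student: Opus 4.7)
My plan is to exploit the tripod structure of the image points in $T'$ and to reduce the cancellation bound to a length estimate inside $T$ controlled by the quotient volume. Given $A,B,C \in T$ with $B \in [A,C]$, set $c := d_{T'}(f(B),[f(A),f(C)])$ and let $p \in [f(A),f(C)]$ be the nearest-point projection of $f(B)$. Since $T'$ is a tree, the three image points $f(A), f(B), f(C)$ form a tripod with center $p$, so $p$ lies on both geodesics $[f(A),f(B)]$ and $[f(B),f(C)]$.

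Continuous images of paths in a tree contain (set-theoretically) the tightening of that path, so $p \in f([A,B])$ and $p \in f([B,C])$. Hence there exist $B_1 \in [A,B]$ and $B_2 \in [B,C]$ with $f(B_1) = f(B_2) = p$, and I choose them innermost, that is, closest to $B$ on their respective sides. The Lipschitz hypothesis then gives $d_T(B,B_i) \geq c/Lip(f)$, so the subsegment $\sigma := [B_1,B_2] \subset [A,C]$ satisfies $d_T(B_1,B_2) \geq 2c/Lip(f)$. The desired inequality $c \leq Lip(f) \cdot qvol(T)$ thus reduces to showing $d_T(B_1,B_2) \leq 2\,qvol(T)$.

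For this length bound I would fix a finite subtree $K \subset T$ with $GK=T$ and $vol(K)$ within $\varepsilon$ of $qvol(T)$, and project $\sigma$ to the finite quotient graph $\Gamma = G\backslash T$. The image $\bar\sigma$ is an edge path of length $d_T(B_1,B_2)$, and $G$-equivariance of $f$ descends $f$ to a map $\bar f : \Gamma \to \Gamma' = G\backslash T'$ sending $\bar\sigma$ to a loop at the image $\bar p$ of $p$. A pigeonhole argument on the edges of $\Gamma$, combined with the innermostness of $B_1, B_2$ (which prevents $\bar\sigma$ from traversing the same unoriented edge more than twice without forcing an interior preimage of $p$), yields $d_T(B_1,B_2) \leq 2\,vol(K)$. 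Letting $\varepsilon \to 0$ and taking the supremum in the definition of $BCC(f)$ finishes the proof.

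The main obstacle is exactly this final pigeonhole step. In the classical free-group setting vertex stabilizers are trivial, everything descends cleanly to the finite graph $\Gamma$, and the argument is essentially combinatorial; here the presence of non-trivial stabilizers $H_i$ at non-free vertices means that a single point of $\Gamma$ has many distinct $G$-translates in $T$ which $\sigma$ might visit without contradicting innermostness. Handling this requires a case split on whether $p$ (and the crossing points of $\sigma$) are free or fixed by some $H_i$, using that a nontrivial element of $H_i$ cannot stabilize a free point of $T'$.
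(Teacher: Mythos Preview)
The paper does not prove this lemma at all: it is quoted verbatim from \cite{horbez2014hyperbolic} and used as a black box. So there is no ``paper's proof'' to compare with; the question is simply whether your argument is correct on its own.

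It is not, and the gap is earlier than you think. Your reduction asserts that if $B_1\in[A,B]$ and $B_2\in[B,C]$ are the innermost preimages of the tripod centre $p$, then $d_T(B_1,B_2)\le 2\,qvol(T)$. This inequality is false in general, already in the classical free case with trivial vertex stabilisers. Take $G=F_2=\langle a,b\rangle$, let $T=T'$ be the Cayley tree (so $qvol(T)=2$), and let $f$ represent the automorphism $\phi(a)=a$, $\phi(b)=a^{n}b$. Then $Lip(f)=n+1$. With $A=1$, $B=a^{-n}$, $C=a^{-n}b$ one finds $f(A)=1$, $f(B)=a^{-n}$, $f(C)=b$, so $p=1$ and $c=n$. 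The \emph{only} preimage of $p$ on $[A,B]$ is $B_1=1$, while the unique preimage on $[B,C]$ sits at distance $n/(n+1)$ from $B$ inside the $b$-edge. Hence $d_T(B_1,B_2)=n+\tfrac{n}{n+1}$, which exceeds $2\,qvol(T)=4$ as soon as $n\ge 4$. (Note that $c=n\le 2(n+1)=Lip(f)\,qvol(T)$, so the lemma itself holds; it is your intermediate claim that fails.)

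This also shows why the pigeonhole step cannot be repaired along the lines you sketch. In the example, the projection $\bar\sigma$ of $[B_1,B_2]$ to $\Gamma=G\backslash T$ traverses the $a$-edge $n$ times, yet there is no interior preimage of $p$. The reason is that a repeated edge in $\Gamma$ only tells you that $\sigma$ contains two $G$-translates $x$ and $gx$; equivariance then gives $f(gx)=g\,f(x)$, which is a \emph{translate} of $f(x)$, not $f(x)$ itself, and there is no mechanism forcing $g\,f(x)=p$. So ``innermostness'' does not interact with edge repetition in the quotient in the way you claim. The difficulty you attribute to the non-free vertices $H_i$ is a red herring: the argument already breaks for $F_2$.

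If you want to prove the bound yourself rather than cite it, the standard route (going back to Cooper and to Gaboriau--Jaeger--Levitt--Lustig, and carried out in the free-product setting by Horbez) is to factor a simplicial $G$-equivariant $f$ as a finite composition of elementary folds followed by a rescaling, and to track how each fold contributes to the cancellation and decreases the quotient volume; the telescoping sum then gives exactly $Lip(f)\,qvol(T)$. Your tripod set-up and the Lipschitz estimate $d_T(B,B_i)\ge c/Lip(f)$ are fine, but they do not by themselves lead to the quotient-volume bound.
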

Therefore we can define a new map, in particular:
\begin{defin}
Let $f: T \rightarrow T$ be a topological representative of $\Phi \in Out(G, \mathcal{O})$ and let's denote by $C$ the bounded cancellation lemma of $f$. Then for every edge path $w$ of $Τ$, we can define the map $f_{\#, C} (w)$ as the path obtained by removing both extremities of length $C$ from the reduced image of $f_{\#}$.
\end{defin}

\subsection{Train Tracks}
In this section we will define the notion of a "good" representative of an outer automorphism $\Phi \in Out(G, \mathcal{O})$. As we have seen there are representatives of every outer automorphism (i.e. $\mathcal{O}$-maps from $T$ to $\phi(T)$), but sometimes we can find representatives with better properties. In particular, we want a topological representative $f$, where $f^{k} (e) =  f^{k} _{\#} (e) $ for every $k$ and for every edge $e$. These maps, which are called \textit{train track maps}, are very useful and every irreducible automorphism has such a representative (we can choose it to be simplicial, as well).\\
We give below a more general definition of a train track map representing an outer automorphism. We are interested for these maps because we can control the cancellation (as we have seen, it is not possible to avoid it).
Firstly, we need the notions of a legal path relative to some fixed train track structure.
\begin{defin}
	\begin{enumerate}
		\item A \textbf{pre-train track structure} on a $G$-tree $T$ is a $G$-invariant equivalence relation on the set of germs of edges at each vertex of $T$.
		Equivalence classes of germs are called \textbf{gates}.
		\item A \textbf{train track structure} on a $G$-tree T is a pre-train track structure with at least two gates at every vertex.
		\item A \textbf{turn} is a pair of germs of edges emanating from the same vertex. A \textbf{legal turn} is called a turn for which the two germs belong to different equivalent classes. A \textbf{legal path}, is a path that contains only legal turns.
	\end{enumerate}	
\end{defin}

Now we can define the train track maps.

\begin{defin}
	An $\mathcal{O}$-map $f : T \rightarrow T$, which is representing $\Phi$ is called a train track map, if there is a train track structure on $T$ so that
	\begin{enumerate}
		\item  f maps edges to legal paths (in particular, $f$ does not collapse edges).
		\item If $f(v)$ is a vertex, then $f$ maps inequivalent germs at $v$ to inequivalent germs at $f(v)$.
	\end{enumerate}
\end{defin}

However, we can not have such representatives for any outer automorphism. But it can be proved that for an interesting class of outer automorphisms can be represented by such a map. We will describe this class for regular automorphisms, but it can easily be defined for outer automorphisms as well.\\
In the free case, an  automorphism $\phi \in Aut(G, \mathcal{O})$ is called  \textit{irreducible}, if it there is no $\phi$-invariant free factor up to conjugation. In our case we know that the $H_i$'s are invariant free factors, but we don't want to have "more invariant free factors". More precisely,
we will define the irreducibility of some  automorphism \textit{relative} to the space $\mathcal{O}$ or to the free product decomposition. Similarly, we can define irreducibility for outer automorphisms of $G$.

Firstly, we will give the algebraic definition, but we need the notion of a free factor system. Suppose that $G$ can be written as a free product, $G = G_1 \ast G_2 \ast ...G_k \ast F_n$. Then we say that the set $\mathcal{A} = \{ [G_i] : 1 \leq i  \leq k \}$ is a\textbf{ free factor system for $G$}, where $[A]$ = $\{ gAg^{-1} : g \in G \}$ is the set of conjugates of $A$.\\
Now we define an order which we denote by $\sqsupseteq$ on the set of free factor systems of $G$. More specifically, given two free factor systems $\mathcal{G} = \{ [G_i] : 1 \leq i  \leq k \}$ and $\mathcal{H} = \{ [H_j] : 1 \leq j  \leq m \}$, we write $\mathcal{G} \sqsubseteq \mathcal{H}$ if for each $i$ there exists a $j$ such that $G_i \leq gH_jg^{-1}$ for some $g \in G$. The inclusion is strict, and we write $\mathcal{G} \sqsubset \mathcal{H}$, if some $G_i$ is contained strictly in some conjugate of $H_j$. We can see  $ \{[G] \} $ as a free factor system and in fact, it is the maximal (under $\sqsubseteq$) free factor system. Any free factor system that is contained strictly to $\mathcal{G}$ is called \textbf{proper}. Note also that the Grushko decomposition induces a free factor system, which is actually the minimal free factor system (relative to $\sqsubseteq$).\\
We say that $\mathcal{G} = \{ [G_i] : 1 \leq i  \leq k \}$ is $\phi$ - \textbf{invariant} for some $\phi \in Aut(G)$, if $\phi$ preserves the conjugacy classes of $G_i$'s. In each free factor system $\mathcal{G} = \{ [Gi] : 1 \leq i  \leq p \}$, we associate the outer space $\mathcal{O} = \mathcal{O}(G, (G_i)^{p}_{i=1}, F_k)$ and any $\phi \in Out(G)$ leaving $\mathcal{G}$ invariant, will act on $\mathcal{O}$ in the same way as we have described earlier.
\begin{defin}
	Let $\mathcal{G}$ be a free factor system of $G$ which is $\Phi$- invariant for some $\Phi \in Out(G)$. Then $\Phi$ is called \textit{irreducible relative to $\mathcal{G}$}, if $\mathcal{G}$ is a maximal (under $\sqsubseteq$) proper, $\Phi$-invariant free factor system.
\end{defin}

We could alternatively define the notion of irreducibility as:

\begin{defin}
	We say $\phi \in  Aut(G, \mathcal{O})$ is $\mathcal{O}$-\textit{irreducible}  if for any $T \in \mathcal{O}$ and choose some $f : T \rightarrow T$ representing $\Phi$, where $\phi \in \Phi$ and $f$ mated with $\phi$, if $W \subseteq T$ is a proper $f$-invariant $G$-subgraph then $W$ does not contain the axis of a hyperbolic element.
\end{defin}

The next lemma confirms that the two definitions of irreducibility are related.

\begin{lemma}
	Suppose $\mathcal{G}$ is a free factor system of $G$ with associated space of trees $\mathcal{O}$, and further suppose that $\mathcal{G}$ is $\phi$-invariant. Then $\phi$ is irreducible relative to $\mathcal{G}$ if and only if $\phi$ is $\mathcal{O}$-irreducible.
\end{lemma}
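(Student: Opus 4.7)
The plan is to prove both implications by contraposition, exploiting a Bass--Serre correspondence between proper $\phi$-invariant free factor systems refining $\mathcal{G}$ and proper $f$-invariant $G$-subforests of some $T \in \mathcal{O}$ that contain a hyperbolic axis. The key observation is that since every $T \in \mathcal{O}$ has trivial edge stabilisers and a minimal, co-compact $G$-action, collapsing each component of a $G$-invariant subforest $W \subseteq T$ produces another $G$-tree with trivial edge stabilisers. Its non-trivial vertex stabilisers (the stabilisers of the collapsed components together with those $H_i$'s whose fixed vertices lie outside $W$) therefore constitute a free factor system of $G$.

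For the direction \emph{algebraic irreducibility $\Rightarrow$ $\mathcal{O}$-irreducibility}, I suppose that $\phi$ is not $\mathcal{O}$-irreducible, so there exist $T \in \mathcal{O}$, a representative $f : T \to T$ mated with $\phi$, and a proper $f$-invariant $G$-subgraph $W$ containing the axis of some hyperbolic element $g$. The observation above produces from $W$ a free factor system $\mathcal{G}'$. Properness of $W$ gives $\mathcal{G}' \sqsubsetneq \{[G]\}$; and since $g$ lies in the stabiliser of its component of $W$, that stabiliser strictly contains the relevant $[H_i]$, whence $\mathcal{G} \sqsubsetneq \mathcal{G}'$. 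Finally, $f(W) \subseteq W$ together with the mating identity $fg = \phi(g)f$ implies that $f$ permutes components of $W$ and that $\phi$ permutes the conjugacy classes of the component stabilisers, so $\mathcal{G}'$ is $\phi$-invariant, contradicting maximality of $\mathcal{G}$.

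For the converse, I assume a proper $\phi$-invariant free factor system $\mathcal{G}'$ with $\mathcal{G} \sqsubsetneq \mathcal{G}' \sqsubsetneq \{[G]\}$. Fix $T \in \mathcal{O}$ and a representative $f : T \to T$ mated with $\phi$. For each $[K] \in \mathcal{G}'$ pick a representative $K$ and let $T_K$ be its minimal invariant subtree, non-degenerate for at least one $[K]$ because $\mathcal{G} \sqsubsetneq \mathcal{G}'$ forces some $K$ to contain a hyperbolic element. Set $W = \bigcup_{[K] \in \mathcal{G}'} G \cdot T_K$. Properness of $\mathcal{G}'$ yields $W \subsetneq T$, and $W$ contains hyperbolic axes. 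The mating condition sends $T_K$ into a $\phi(K)$-invariant subset of $T$; by $\phi$-invariance of $\mathcal{G}'$, this subset lies in a translate of some $T_{K'}$, hence in $W$, so $f(W) \subseteq W$, contradicting $\mathcal{O}$-irreducibility. The main obstacle is precisely this last step: guaranteeing honest containment $f(T_K) \subseteq W$ rather than merely $[f(T_K)] \subseteq W$, since $f$ may create short excursions outside the minimal subtree before folding back. This is remedied by appealing to Lemma \ref{BCL}: the cancellation constant of $f$ is uniformly bounded, so enlarging $W$ by a uniform equivariant neighbourhood absorbs every such excursion while preserving properness, and the contradiction survives.
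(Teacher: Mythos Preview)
The paper states this lemma without proof (it is taken as a known equivalence, essentially from \cite{francaviglia2013stretching}), so there is no argument in the paper to compare against. Your overall strategy---Bass--Serre correspondence between proper invariant free factor systems refining $\mathcal{G}$ and proper $f$-invariant subforests carrying a hyperbolic axis---is the standard one, and the first direction is fine.

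The second direction, however, has a genuine gap in the ``remedy'' you propose. You correctly identify the obstacle: for an arbitrary representative $f$, the image $f(T_K)$ is only a $\phi(K)$-invariant subtree of $T$ and may properly contain the minimal $\phi(K)$-subtree, so $f(W)\subseteq W$ can fail. But enlarging $W$ to a $C$-neighbourhood $W'=N_C(W)$ with $C=\mathrm{BCC}(f)$ does \emph{not} produce an $f$-invariant set. What the bounded cancellation argument gives you is $f(W)\subseteq N_C(W)=W'$; it says nothing about $f(W')$. Since $f$ has Lipschitz constant $\lambda>1$ in the interesting cases, one only gets $f(W')\subseteq N_{\lambda C}(f(W))\subseteq N_{(\lambda+1)C}(W)$, and iterating blows up geometrically. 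There is no fixed finite thickening of $W$ that is guaranteed to be $f$-invariant.

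The correct repair is not to enlarge $W$ but to exploit the quantifier in the definition of $\mathcal{O}$-irreducibility: to contradict it you only need \emph{some} $T\in\mathcal{O}$ and \emph{some} representative $f$ with a bad invariant subforest. So choose $T$ to be a blow-up of a tree $T'\in\mathcal{O}(G,\mathcal{G}')$: replace each vertex of $T'$ with stabiliser (a conjugate of) $K_j$ by a copy of a $K_j$-tree. The preimage in $T$ of the non-free vertex set of $T'$ is then a proper $G$-invariant subforest $W$ whose components are exactly the $G$-translates of the $T_{K_j}$. Now build $f$ by first choosing any representative $f':T'\to T'$ of $\Phi$ (which necessarily sends non-free vertices to non-free vertices) and lifting it to $T$, extending over the blown-up pieces by arbitrary $\mathcal{O}$-maps between the appropriate $K_j$-trees. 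By construction this $f$ satisfies $f(W)\subseteq W$ on the nose, and $W$ contains a hyperbolic axis because $\mathcal{G}\sqsubsetneq\mathcal{G}'$. No bounded cancellation is needed.
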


Now let's give the definition of an irreducible automorphism with irreducible powers relative to $\mathcal{O}$, which are the automorphisms that we will study.
\begin{defin}
	An outer automorphism $\phi \in Out(G, \mathcal{O})$ is called \textbf{IWIP} (\textit{irreducible with irreducible powers} or \textit{fully irreducible}), if every $\phi ^{k}$ is irreducible relative to $\mathcal{O}$.
\end{defin}

Now as we have said above, every irreducible outer automorphism has a train track representative. This fact it generalises the well known theorem of Bestvina and Handel (see \cite{bestvina1992train}) . In particular, we can apply it on every power of some IWIP automorphism.
\begin{theorem} [Francaviglia- Martino]
	Let $\Phi \in Out(G, \mathcal{O})$ be irreducible. Then there exists a simplicial train track map representing $\Phi$.
\end{theorem}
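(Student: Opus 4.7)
The plan is to carry out the Lipschitz-metric minimisation strategy of Francaviglia–Martino, which adapts the Bestvina–Handel folding argument to the relative outer space $\mathcal{O}$. For a fixed irreducible $\Phi \in Out(G, \mathcal{O})$ and any $T \in \mathcal{O}$, normalised to have covolume $1$, I would define
\[
\lambda(T, \Phi) = \inf\{\operatorname{Lip}(f) : f : T \to \Phi(T) \text{ is an } \mathcal{O}\text{-map}\}.
\]
The first step is to show this infimum is attained for each fixed $T$: Lemma \ref{O-maps} exhibits at least one such map, and since any $\mathcal{O}$-map is determined on the finitely many orbits of non-free vertices by $\Phi$, an Arzel\`a–Ascoli argument applied on a fundamental domain of the $G$-action extracts a minimiser $f_T$.

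The second step is to minimise over $T$. I would show that $T \mapsto \lambda(T, \Phi)$ is lower semicontinuous in a suitable topology on $\mathcal{O}$, and that irreducibility prevents a minimising sequence $T_n$ from escaping to $\partial \mathcal{O}$: any limiting degenerate tree would yield a proper $\Phi$-invariant free factor system strictly containing the system associated with $\mathcal{O}$, contradicting the irreducibility of $\Phi$ relative to $\mathcal{O}$. This produces a tree $T_0$ realising $\inf_T \lambda(T, \Phi)$.

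The heart of the argument is to show that the optimal representative $f_0 : T_0 \to \Phi(T_0)$ at this minimiser is a train track map. Suppose it is not; then there is an edge $e$ and an iterate $k$ for which $f_0^k(e) \neq (f_0^k)_{\#}(e)$, so cancellation occurs at some vertex $v$. This means two germs of edges at $v$ have the same image direction under $f_0$, forming an illegal turn. I would perform a partial Stallings-type fold along this illegal turn to build a new tree $T'$ and $\mathcal{O}$-map $f' : T' \to \Phi(T')$, showing $\operatorname{Lip}(f') < \operatorname{Lip}(f_0)$ and contradicting minimality. This is where I expect the main obstacle: the fold must be executed so that $T'$ stays in $\mathcal{O}$ (edge stabilisers remain trivial, non-free vertices are preserved, the action stays minimal) and so that the Lipschitz constant really strictly decreases. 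The bounded cancellation lemma (Lemma \ref{BCL}) is the tool used to control how much folding is needed, and a careful combinatorial accounting of gates is required to ensure the process actually lowers $\lambda$.

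Finally, to arrange that the resulting train track map is simplicial, I would pass to the transition matrix on $G$-orbits of edges induced by $f_0$, which by the irreducibility of $\Phi$ is a Perron–Frobenius matrix. Assigning each orbit of edges the corresponding Perron–Frobenius eigenvector entry as its length (and subdividing if needed so that $f_0$ sends vertices to vertices), the representative becomes an affine stretching by the Perron–Frobenius eigenvalue $\lambda(\Phi)$ on each edge, as required for a simplicial train track map.
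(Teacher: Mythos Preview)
The paper does not contain a proof of this theorem: it is stated and attributed to Francaviglia and Martino (the reference \cite{francaviglia2013stretching}) and used as a black box throughout. So there is no ``paper's own proof'' to compare against; your proposal is effectively a sketch of the original Francaviglia--Martino argument.

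As such a sketch, your outline is broadly on target --- minimise the Lipschitz constant of a representative and show that failure of the train-track property allows a strict decrease --- but two points deserve correction. First, your compactness step is stated backwards: it is the \emph{non-free} vertices whose images are forced by equivariance, while the free vertices are the genuine parameters. The Arzel\`a--Ascoli argument you invoke does not apply directly because $T$ need not be locally compact when some $H_i$ is infinite; Francaviglia--Martino instead restrict to \emph{straight} maps (linear on edges), which are determined by the images of finitely many orbit-representatives of free vertices ranging over a bounded region, and this is what yields existence of an optimal map. Second, your ``escape to the boundary'' step is where the real work hides: in the relative setting one does not simply minimise $\lambda(T,\Phi)$ over all of $\mathcal{O}$ and appeal to a compactness/irreducibility dichotomy. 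Francaviglia--Martino argue via the \emph{tension graph} (the subgraph where the map is maximally stretched) and show that irreducibility forces it to be the whole tree; the fold is then performed at an illegal turn inside the tension graph. Your use of Lemma~\ref{BCL} in this step is misplaced --- bounded cancellation plays no role in the folding/decrease argument. The final Perron--Frobenius step for simpliciality is essentially correct.
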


An interesting remark is the following:
\begin{remark}
	Every outer automorphism $\phi \in Aut(G)$ is irreducible relative to some appropriate space (or relative to some free product decomposition). Moreover, there are two cases: either $\phi$ is IWIP relative to $\mathcal{O}$ or it fixes a point of $\mathcal{O}$ (i.e. there is $T \in \mathcal{O}$ s.t. $\phi$ can be seen as an isometry of $T$).\end{remark}
In particular, using the remark above, in the relative free case we have some results for automorphisms of $Out(F_n)$ that they are not IWIP relative to $CV_n$, but they are IWIP relative to some appropriate space $\mathcal{O}$.

\underline{\textbf{Splittings and Appropriate train track maps:}}\\
In this section, we will define the notion of an appropriate train track representative which is similar to the definition of the free case. As we have discussed the notion of a Nielsen path in our case it has to be replaced by the notion of a $N$-path. Using $N$-paths, we can define the stable train track representatives.
\begin{defin}
We say that a train track representative $f$ of an outer automorphism $\Phi$ is stable, if it supports at most one equivalence class of $N$-paths.
\end{defin}
It is well known that every outer automorphism can be represented by a stable train track representative (for example see \cite{collins1994efficient} or \cite{sykiotis2004stable}). Let's denote by $p$ some representative of the unique class of the $N$-path that $f$ supports, if it exists. Here we need some even better notion of train track representatives, but firstly we need the notion of a splitting.\\
More specifically, let $f$ as above, and let $w$ be a path in $T$. We say that $w = ...w_m w_{m+1}...$, where $w_i$'s are non-trivial subpaths of $w$, is a splitting for $f$ if for all $k \geq 1$, $f^{k} _{\#} (w) = ...f^{k} _{\#} (w_m) f^{k} _{\#} (w_{m+1}).... $. Then we use the notation: $w =  ... \cdot w_m \cdot w_{m+1} \cdot ...$ and the $w_i$'s are called the bricks of $w$.
\begin{defin}
A stable train track representative  $f : T \rightarrow T$ of an IWIP outer automorphism $\Phi \in Out(G, \mathcal{O})$ is called appropriate, if for any path $w$ of $T$, there exists some positive integer $K$ s.t. for all $k \geq K$, $f^{k} _{\#} (w)$ has a splitting where the bricks are either edger or they are $N$-paths equivalent to $p$.
\end{defin}

The proof of the next lemma is the same as in the free case,  but the main difference is that we don't have finitely many paths of a given length but finitely many inequivalent paths of a given length. Therefore in the conclusion we have N-paths (and no Nielsen paths). 
\begin{lemma}\label{appropriate}
Let $\Phi \in Out(G, \mathcal{O})$ be an IWIP automorphism. Then there exists some positive power of $\Phi$, which can be represented by an appropriate train track representative.
\end{lemma}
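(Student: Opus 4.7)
The plan is to model the argument on the free-group version (Bestvina--Handel \cite{bestvina1992train}), replacing the rigid notion of a Nielsen path by that of an N-path, and replacing ``finiteness of paths of bounded length'' by ``finiteness of equivalence classes of paths of bounded length''. First I would replace $\Phi$ by a suitable positive power so that $\Phi$ admits a stable simplicial train track representative $f : T \to T$: the existence of a train track representative is Francaviglia--Martino, and up to passing to a further power one obtains stability by the arguments of \cite{collins1994efficient} and \cite{sykiotis2004stable}. Let $p$ be a representative of the (possibly empty) equivalence class of $N$-paths supported by $f$, and let $C = \mathrm{BCC}(f)$ be the bounded cancellation constant, which exists by Lemma \ref{BCL}.

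Next I would analyse the behaviour of iteration on an arbitrary path $w$ by tracking its illegal turns. Since $f$ is a train track map, the image of a legal path is legal and no cancellation occurs between the $f$-images of two edges forming a legal turn; hence each legal maximal subpath of $w$ contributes bricks that are (ultimately) single edges in any candidate splitting. The non-trivial work is at each illegal turn $\tau$ in $w$: after one application of $f$ the turn $\tau$ is replaced by a turn $f(\tau)$ that may again be illegal, and under bounded cancellation the folding at $\tau$ that occurs when one tightens $f(w)$ is confined to a neighbourhood of $\tau$ of length at most $C$. There are two cases: either some iterate $f^{k}(\tau)$ is legal, in which case one obtains a genuine splitting point of $f^{k}_{\#}(w)$ at the (image of the) $C$-neighbourhood of $\tau$; or $f^{k}(\tau)$ remains illegal for every $k$.

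For the persistent case, I would apply a pigeonhole argument on equivalence classes. Let $q_k$ be the subpath of $f^{k}_{\#}(w)$ of length $2C$ centred at the image of $\tau$; these paths have bounded length, and because the quotient $G/T$ is finite there are only finitely many equivalence classes of paths of length at most $2C$. Hence two of the $q_k$ are equivalent, which produces a closed (up to the $G$-action) $f$-periodic combinatorial pattern around~$\tau$. Replacing $f$ by a further power so that this pattern is strictly periodic, one extracts an N-path $q$ straddling the illegal turn. By stability of $f$, every N-path that $f$ supports is equivalent to $p$, so $q$ is equivalent to $p$. Iterating this analysis simultaneously at all finitely many illegal turns of $w$ (and taking $K$ large enough so that every illegal turn has either been resolved or been expanded to contain a full copy of $p$), I would conclude that $f^{K}_{\#}(w)$ admits a splitting whose bricks alternate between single edges coming from the legal stretches and $N$-paths equivalent to $p$ coming from the persistent illegal turns.

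The hard step I expect is the pigeonhole/periodicity step in the persistent case: in the free setting it is immediate from finiteness of paths of bounded length, whereas here the infinite stabilisers $H_i$ can decorate such paths with arbitrarily many elements, and one must argue carefully that the relevant combinatorial data (the germs at the turn and the neighbouring edges modulo the $G$-action) really live in a finite set, so that a periodic $q$ is produced. Once this is in place, the equivalence of $q$ with $p$ is forced by stability, and the construction of the appropriate splitting of $f^{K}_{\#}(w)$ follows by the same bookkeeping as in the classical proof.
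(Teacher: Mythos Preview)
Your proposal is correct and matches the paper's approach exactly. The paper does not give a detailed proof of this lemma; it only remarks that ``the proof of the next lemma is the same as in the free case, but the main difference is that we don't have finitely many paths of a given length but finitely many inequivalent paths of a given length,'' and your outline is precisely a fleshing-out of that remark --- modelling the Bestvina--Handel argument, tracking illegal turns, and running the pigeonhole step on equivalence classes (projections to the finite quotient $G\backslash T$) rather than on paths themselves, which is what forces the appearance of $N$-paths rather than Nielsen paths in the conclusion.
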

%PROOF. As in the free case, but we don't have finitely many paths of a given lenght but f.m. inequivalent paths. So instead of Nielsen paths we have N-paths.

\section{The Attractive lamination of an IWIP Automorphism}
In this section we recall the notion of an algebraic lamination. In particular, we describe the construction and the properties of the attractive lamination of an IWIP automorphism which have been proved by the author in \cite{syrigos2014irreducible}. Note that this construction is a direct generalisation of the corresponding well known notion due to Bestvina, Feighn and Handel in the free case, see \cite{bestvina1997laminations}. 
\subsection{Laminations}
We denote by $\partial ^{2} (G, \mathcal{O})$ the pairs of the boundary which don't belong in the diagonal, i.e. the set $\{ (X, Y) | X,Y \in \partial  (G, \mathcal{O}), X \neq Y \} $. Note that the topology of $\partial  (G, \mathcal{O})$ induces a topology on $\partial ^{2} (G, \mathcal{O})$. Moreover, we have a natural action of $G$ on $\partial  (G, \mathcal{O})$ which induces a diagonal action on $\partial ^{2} (G, \mathcal{O})$.
\begin{defin}
An \textbf{algebraic lamination}  $L$ of $G$ is a subset of $\partial ^{2} (G, \mathcal{O})$, which is closed, $G$-invariant and flip invariant (i.e. if $(X, Y) \in \partial ^{2} (G, \mathcal{O})$, then $(Y,X) \in \partial ^{2} (G, \mathcal{O})$).
\end{defin}
The identification of $\partial (G, \mathcal{O})$ with $\partial T$ where $T \in \mathcal{O}$, implies that the lamination $L$ induces a set of lines $L(T)$ in $T$, which we call the \textbf{symbolic lamination} in $T$-coordinates associated to  $L$. A line of the lamination is called \textbf{leaf}. Now we can define the \textbf{laminary language} $\mathcal{L} (L(T))$ in $T$-coordinates as the $G$-set of all (orbits of) finite edge paths which occur in some leaf of $L(T)$.
\subsection{Action of $Out(G, \mathcal{O})$ on the set of Laminations}
Now we can define an action of $Out(G, \mathcal{O})$ on the set of algebraic laminations of $G$, as follows: for $\phi \in Aut(G , \mathcal{O}), (X, Y) \in \partial ^{2} (G, \mathcal{O}) $ we define the map $\partial ^{2} \phi$ by $\partial ^{2} \phi (X, Y) = (\partial  \phi (X) , \partial  \phi(Y) )$ which is a well defined homeomorphism of $\partial  ^{2} (G, \mathcal{O})$ (since $\partial \phi$ is a homeomorphism). This implies that this map sends an algebraic lamination to an algebraic lamination. Note that from the $G$-invariance of the lamination follows that the image of this map depends only of the outer automorphism $\Phi$, where $\phi \in \Phi$. As a consequence we have a well defined action of the group of outer automorphisms $Out(G, \mathcal{O})$ on the set of algebraic laminations .
\begin{defin}
Let $\Phi \in Out(G, \mathcal{O})$ and $L$ be an algebraic lamination. Moreover, assume $f : T \rightarrow T$ for some $T \in \mathcal{O}$ is a topological representative of $\Phi$ and let denote by $C$ the bounded cancellation constant corresponding to $f$.
\begin{itemize}
\item We say that $\Phi$ \textbf{stabilises the algebraic lamination} $L$, if $\Phi(L) = L$.
\item We say that $f$ \textbf{stabilises the laminary language} $\mathcal{L} (L(T))$, if for all $w \in \mathcal{L} (L(T))$, $f_{\#, C} (w) \in \mathcal{L} (L(T))$.
\end{itemize}

\end{defin}
Actually, these definitions are closely related. More specifically, by \cite{syrigos2014irreducible} we have:

\begin{prop} \label{Stabilises}
Let $\Phi \in Out(G, \mathcal{O})$ be an IWIP outer automorphism and $L^{+} _{\Phi}$ be its attractive lamination. Then $\Psi$ stabilises $L^{+} _{\Phi}$ iff there is some representative $h : T \rightarrow T$ of $\Psi$, where $T \in \mathcal{O}$, which stabilises the laminary language $\mathcal{L} (L^{+} _{\Phi})$ of $\Phi$.
\end{prop}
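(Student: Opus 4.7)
The plan is to prove the two implications separately, with the bounded cancellation constant $C = BCC(h)$ playing the key role in each direction. For the forward implication, assume $\Psi$ stabilises $L^{+}_{\Phi}$ and fix any simplicial topological representative $h : T \to T$ of $\Psi$ mated with some $\psi \in \Psi$. Let $w \in \mathcal{L}(L^{+}_{\Phi}(T))$; by definition $w$ is a finite subpath of some leaf $\ell$ of the symbolic lamination, corresponding to a pair of endpoints $(X, Y) \in L^{+}_{\Phi} \subseteq \partial^{2}(G, \mathcal{O})$. Since $\Psi$ stabilises $L^{+}_{\Phi}$, the pair $\partial^{2} \psi (X, Y)$ lies in $L^{+}_{\Phi}$, so the tightened bi-infinite line $h_{\#}(\ell)$ is again a leaf $\ell'$ of $L^{+}_{\Phi}(T)$. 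Splitting $\ell = \alpha \cdot w \cdot \beta$ and invoking Lemma \ref{BCL}, the cancellation between $h_{\#}(w)$ and each of $h_{\#}(\alpha), h_{\#}(\beta)$ inside $\ell'$ is bounded by $C$; hence removing segments of length $C$ from each end of $h_{\#}(w)$ yields a finite subpath of $\ell'$, i.e.\ $h_{\#, C}(w) \in \mathcal{L}(L^{+}_{\Phi}(T))$.

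For the reverse implication, assume $h : T \to T$ is a representative of $\Psi$ that stabilises $\mathcal{L}(L^{+}_{\Phi}(T))$, and let $\ell$ be any leaf of $L^{+}_{\Phi}(T)$ with endpoints $(X, Y)$. For any finite subpath $v$ of the tightened image $h_{\#}(\ell)$, choose a finite subpath $w$ of $\ell$ long enough that $v \subseteq h_{\#, C}(w)$; this is possible because $h$ is Lipschitz and only some extra length on each side of the preimage region of $v$ is needed in order to absorb the $C$-boundary loss in passing from $h_{\#}(w)$ to $h_{\#, C}(w)$. By hypothesis $h_{\#, C}(w) \in \mathcal{L}(L^{+}_{\Phi}(T))$, so $v \in \mathcal{L}(L^{+}_{\Phi}(T))$ as well. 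Since every finite subpath of $h_{\#}(\ell)$ lies in the laminary language, $h_{\#}(\ell)$ is itself a leaf of $L^{+}_{\Phi}(T)$; translating back to $\partial^{2}(G, \mathcal{O})$, this gives $\partial^{2} \psi (L^{+}_{\Phi}) \subseteq L^{+}_{\Phi}$. To upgrade inclusion to equality, I appeal to the minimality of $L^{+}_{\Phi}$ as an algebraic lamination, which is among the properties of the attractive lamination of an IWIP automorphism relative to $\mathcal{O}$ established in \cite{syrigos2014irreducible}: the non-empty, closed, flip-invariant, $G$-invariant sublamination $\partial^{2} \psi (L^{+}_{\Phi})$ of $L^{+}_{\Phi}$ must then coincide with $L^{+}_{\Phi}$, yielding $\Psi(L^{+}_{\Phi}) = L^{+}_{\Phi}$.

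The main obstacle I anticipate is the bookkeeping in the reverse direction: one has to verify, for each finite subpath $v$ of $h_{\#}(\ell)$, that a finite $w \subset \ell$ can be chosen so that $v \subseteq h_{\#, C}(w)$ (not merely $v \subseteq h_{\#}(w)$). This is essentially a Lipschitz estimate combined with the defining bound on cancellation, but aligning the constants correctly and handling the fact that tightening can discard arbitrarily long initial and terminal segments of $h(w)$ is the heart of the argument. The remaining steps are formal manipulations of the symbolic lamination and its laminary language, together with a one-line invocation of minimality to pass from one-sided invariance to genuine stabilisation.
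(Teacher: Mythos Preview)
The paper does not actually supply a proof of this proposition; it is quoted directly from \cite{syrigos2014irreducible} with the prefatory clause ``by \cite{syrigos2014irreducible} we have''. There is therefore no in-paper argument to compare your attempt against.

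On its own merits your argument is sound and follows the standard route. A couple of small remarks. In the forward direction you in fact show that \emph{every} topological representative of $\Psi$ stabilises the laminary language, which is stronger than the ``there exists'' asserted in the statement; this is harmless. In the reverse direction, the step ``every finite subpath of $h_{\#}(\ell)$ lies in the laminary language, hence $h_{\#}(\ell)$ is a leaf'' deserves one explicit sentence: it uses closedness of $L^{+}_{\Phi}$ (translate leaves containing longer and longer subpaths of $h_{\#}(\ell)$ so that those subpaths line up, and pass to the limit). Your worry about ``tightening discarding arbitrarily long initial/terminal segments of $h(w)$'' is not a genuine obstacle: $h_{\#}(w)$ is simply the geodesic $[h(a),h(b)]$ for the endpoints $a,b$ of $w$, and since $h$ is a quasi-isometry these endpoints run off to the two ends of $h_{\#}(\ell)$ as $w$ exhausts $\ell$; combined with bounded cancellation this gives that $h_{\#,C}(w)$ is always a subpath of $h_{\#}(\ell)$ and exhausts it. Finally, the appeal to minimality is legitimate --- it follows from the quasiperiodicity of every leaf recorded in Proposition~\ref{properties of the lamination} --- but you might state that deduction rather than citing minimality as a black box.
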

\subsection{The Attractive Lamination of an IWIP Outer Automorphism}
Here we describe the construction of the attractive lamination relative to an IWIP outer automorphism $\Phi \in Out(G, \mathcal{O})$ and we list some interesting properties.\\
Firstly, we recall the notion of a quasi-periodic line $\ell$ of $T \in \mathcal{O}$.
\begin{defin}
 A line $\ell$ of $T$ is called \textbf{quasi-periodic} (or q.p.) if for every $L>0$ there exists some $L'$ (sufficiently large) s.t. for every subpath of length $L$ of $\ell$ occurs as subpath of some orbit of every subpath of $\ell$ of length $L'$. 
\end{defin}
Note that the notion of quasi-periodicity for $\ell$ implies that (the orbits of) every path of $\ell$ occurs infinite many times in both ends of $\ell$ and moreover the distance between any two occurrences is bounded.\\
Now for an IWIP automorphism $\Phi$, let's choose some train track representative $f: T \rightarrow T$ of $\Phi$. We can define the laminary language of our attractive lamination as the $G$- set $\mathcal{L} ^{+} _{f}$ of the orbits of finite edge paths in $T$ s.t.  an edge path $w \in \mathcal{L} ^{+} _{f}$ iff there exists an edge $e$ of $T$ and an integer $k \geq 1$ s.t. $w$ is a subpath of $f^{k}(e)$.\\
It can be proved that we have the following properties:
\begin{prop} \cite{syrigos2014irreducible} \label{properties of the lamination}
\begin{enumerate}
\item For any edge $e$ of $T$ and for all $w \in \mathcal{L} ^{+} _{f}$, there is some $k$ such that $w$ is a subpath of some orbit of $f^{k}(e)$.
\item There exists an algebraic lamination $L_{\Phi} ^{+}$ whose laminary language in $T$-coordinates is  $\mathcal{L} ^{+} _{f}$.
\item This algebraic lamination does not depend on the choice of the train track map $f$ representing $\Phi$ and of the tree $T$.
\item Every leaf of the $L^{+} _{\Phi}$ is quasiperiodic.
\end{enumerate}
\end{prop}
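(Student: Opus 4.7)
The plan is to prove the four items in sequence, all leveraging the IWIP hypothesis, which yields a strong primitivity property for the transition matrix of $f$: since every power $\Phi^{k}$ is irreducible relative to $\mathcal{O}$, there is an integer $N$ such that for any pair of edges $e,e'$ of $T$, the path $f^{N}(e)$ contains a $G$-translate of $e'$.

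For item (i), given $w \in \mathcal{L}^{+}_{f}$, by definition there exist an edge $e'$ and $m \geq 1$ such that $w$ is a subpath of $f^{m}(e')$. Choosing $N$ as above for the pair $(e,e')$, the path $f^{N+m}(e) = f^{m}(f^{N}(e))$ contains a $G$-translate of $f^{m}(e')$, hence a $G$-translate of $w$. For item (ii), I would define $L^{+}_{\Phi}$ as the set of $(X,Y) \in \partial^{2}(G,\mathcal{O})$ such that, after identifying $\partial (G,\mathcal{O})$ with $\partial T$, the bi-infinite line in $T$ from $X$ to $Y$ has every finite subpath lying in $\mathcal{L}^{+}_{f}$. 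This set is manifestly $G$-invariant and flip-invariant, and closed because membership is a countable intersection of clopen conditions on $\partial^{2}(G,\mathcal{O})$. The nontrivial content is that the induced laminary language equals $\mathcal{L}^{+}_{f}$: one inclusion is tautological, while the other requires extending any given $w \in \mathcal{L}^{+}_{f}$ to a bi-infinite line all of whose finite subpaths lie in $\mathcal{L}^{+}_{f}$. I would achieve this by iterating $f$ on an edge $e$ chosen so that $w$ occurs in $f^{m}(e)$, using (i) to produce arbitrarily long subpaths in $\mathcal{L}^{+}_{f}$ extending $w$ on both sides, then invoking a diagonal/compactness argument to extract a limit line.

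For item (iii), I would work intrinsically in $\partial^{2}(G,\mathcal{O})$, which does not depend on $T$. Given a second train track representative $f' : T' \to T'$ of $\Phi$, take an $\mathcal{O}$-map $h : T \to T'$ and use the bounded cancellation Lemma \ref{BCL}: the image under $h$ of a leaf of $L^{+}_{\Phi}$ viewed in $T$ becomes, after removing bounded extremities, a bi-infinite line whose every finite subpath lies in $\mathcal{L}^{+}_{f'}$; the symmetric argument with an $\mathcal{O}$-map $T' \to T$ gives the reverse inclusion, so the two descriptions of $L^{+}_{\Phi}$ agree. For item (iv), fix a leaf $\ell$ of $L^{+}_{\Phi}$, a length $L$, and a subpath $u \subset \ell$ of length $L$. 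By (ii), $u$ is a subpath of some $f^{m}(e)$; applying primitivity to the power $f^{N}$, for every edge $e'$ and every $j \geq N$ the path $f^{m+j}(e')$ contains a $G$-translate of $f^{m}(e)$ and hence of $u$. Choosing $L'$ large enough that every subpath of $\ell$ of length $L'$ is a subpath of some $f^{m+j}(e')$ with $j \geq N$, every such subpath contains a $G$-translate of $u$, which is the quasi-periodicity condition.

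The main obstacle will be the extension argument in item (ii): one must simultaneously extend $w$ on both sides, control the $G$-orbits of the approximating finite subpaths along the way, and end up with a genuine pair of distinct points in $\partial^{2}(G,\mathcal{O})$ rather than merely an abstract bi-infinite line in $T$. Ensuring that the limit is bi-infinite (neither end collapsing to a finite path) and that the two ideal endpoints are distinct requires a slightly more quantitative form of primitivity than the one used for (i), combined with an appeal to the compactness of $\partial^{2}(G,\mathcal{O})$ in the natural topology.
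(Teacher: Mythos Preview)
The paper does not prove this proposition: it is quoted verbatim from \cite{syrigos2014irreducible} (the author's companion paper generalising the Bestvina--Feighn--Handel lamination to free products), and no argument is given here. So there is no proof in the present paper to compare your proposal against. Your sketch follows the expected line of the BFH construction adapted to free products, and items (i), (iii), (iv) are essentially the standard arguments.

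There is, however, a genuine gap in your proposal for item (ii). You write that you would extract the bi-infinite leaf extending $w$ by ``an appeal to the compactness of $\partial^{2}(G,\mathcal{O})$ in the natural topology''. But the paper explicitly records (just after the definition of $\partial T$) that when some $H_i$ is infinite, $\partial T$ is \emph{not} compact in the metric topology: a non-free vertex of infinite valence gives a sequence of rays at constant pairwise distance with no convergent subsequence. So the compactness you invoke is simply unavailable in the free-product setting. The correct way around this is to avoid the boundary entirely and run the diagonal argument inside $T$: since $G\backslash T$ is a finite graph, there are only finitely many $G$-orbits of edges, and one can extend $w$ one edge at a time on each side, at each stage choosing (by pigeonhole on orbits) an extension that admits further extensions in $\mathcal{L}^{+}_{f}$ of every length. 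Alternatively, one can produce a leaf directly by finding a periodic edge $e$ with $e\subset f^{k}(e)$ and taking the nested union $\bigcup_{n} f^{kn}(e)$, then using primitivity to see that its laminary language is all of $\mathcal{L}^{+}_{f}$. Either route bypasses the failure of compactness; your proposal as written does not.
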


Moreover, we have a very interesting result about the stabiliser of the lamination which has been proved by the author in \cite{syrigos2014irreducible}:

\begin{theorem} \label{Virtually cyclic}
Let's denote by $Stab(L_{\Phi} ^{+})$ the stabiliser of the lamination. Then there is a normal periodic subgroup $A$ of $Stab(L_{\Phi} ^{+}) \cap Out(G, \{ H_i \} ^{t} ) $, such that the group $Stab(\Lambda) / A$ has a normal subgroup $B$ isomorphic to a subgroup of $\bigoplus \limits_{i=1} ^{p} Out(H_i)$ and $(Stab(\Lambda) / A) / B$ is isomorphic to $\mathbb{Z}$.
\end{theorem}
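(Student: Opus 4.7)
The plan is to imitate the Bestvina--Feighn--Handel argument from the free group setting, using the train-track and lamination machinery assembled in the preceding sections. Fix an appropriate train-track representative $f:T\to T$ of $\Phi$ with Perron--Frobenius expansion factor $\lambda>1$; by Proposition \ref{Stabilises}, every $\Psi\in Stab(L_\Phi^+)$ admits a representative $h$ that stabilises the laminary language $\mathcal{L}(L_\Phi^+)$, so the whole argument can be phrased in terms of the combinatorial dynamics of representatives acting on $\mathcal{L}(L_\Phi^+)$.

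First I would build a stretch-factor homomorphism $\mu:Stab(L_\Phi^+)\to\mathbb{R}_{>0}$. For $\Psi$ represented by such an $h$ and a sufficiently long leaf segment $w\in\mathcal{L}(L_\Phi^+)$, set $\mu(\Psi)=\lim_{n\to\infty}|h^n_{\#,C}(w)|^{1/n}$. Quasi-periodicity of leaves (Proposition \ref{properties of the lamination}) combined with the bounded cancellation Lemma \ref{BCL} ensure that this limit exists and is independent of the choice of $w$ and of $h$; multiplicativity then makes $\mu$ a homomorphism. I would then argue that the image of $\mu$ is the cyclic group generated by $\lambda$: any element $\Psi$ whose stretch were not an integer power of $\lambda$ could be composed with a suitable power of $\Phi^{-1}$ to produce an outer automorphism with stretch on the lamination arbitrarily close to $1$, contradicting the Perron--Frobenius rigidity of the irreducible transition matrix of $f$ restricted to $\mathcal{L}(L_\Phi^+)$. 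This yields a short exact sequence with quotient $\mathbb{Z}$.

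Next I would analyse the kernel $K=\ker(\mu)$, where the bulk of the work lies. Any $\Psi\in K$ has a representative $h$ that does not stretch the lamination; combining this with item (i) of Proposition \ref{properties of the lamination} (every edge of $T$ eventually appears inside some $f^n(e)$) and bounded cancellation, $h$ must permute $G$-orbits of edges of $T$ with bounded displacement. A pigeonhole argument on the finitely many edge orbits of the quotient graph $G/T$ then shows that some power of $h$ fixes every orbit. Let $A\subseteq Stab(L_\Phi^+)\cap Out(G,\{H_i\}^t)$ be the subgroup of elements that induce the trivial permutation of edge orbits and that act trivially (up to conjugation) on each vertex stabiliser $H_i$; this is a normal subgroup all of whose elements have finite order, hence $A$ is periodic.

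After quotienting by $A$, every element of $K/A$ has a representative sending each edge to a $G$-translate of itself and each non-free vertex $v_i$ to a $G$-translate of itself. Restricting to the vertex stabiliser therefore induces, for each $i$, an outer automorphism of $H_i$; the diagonal assignment yields an embedding $K/A\hookrightarrow \bigoplus_{i=1}^{p} Out(H_i)$, whose image is the required subgroup $B$. Combining with the $\mathbb{Z}$-quotient coming from $\mu$ gives the stated extension $1\to B\to Stab(L_\Phi^+)/A\to \mathbb{Z}\to 1$. The main obstacle I foresee is the discreteness step for $\mu$: in the free case this uses that every train-track representative of an element of $Stab(L_\Phi^+)$ has Perron--Frobenius eigenvalue commensurable with $\lambda$, but in the free product setting leaves may traverse non-free vertices of infinite valence, so I would need to extract a quotient subshift capturing only the transitions between neighbourhoods of non-free vertices in order to apply Perron--Frobenius and conclude discreteness.
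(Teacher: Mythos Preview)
The paper does not prove this theorem at all: it is quoted verbatim from the author's earlier work \cite{syrigos2014irreducible} (see the sentence immediately preceding the statement, ``which has been proved by the author in \cite{syrigos2014irreducible}''), and no argument is given here. So there is no in-paper proof to compare your attempt against.

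That said, your sketch follows the expected Bestvina--Feighn--Handel template and is broadly in the right spirit, but two steps are not yet justified. First, the periodicity of $A$: you \emph{assert} that elements fixing every edge orbit of $G\backslash T$ and acting by inner automorphisms on every $H_i$ have finite order, but you give no reason. In the free case this is immediate because such an element is a simplicial automorphism of a finite graph; here you still need to argue that an outer automorphism represented by an isometry of $T$ inducing the identity on $G\backslash T$ and inner on each vertex group is itself finite order in $Out(G,\mathcal{O})$, and that the collection of such elements really forms a \emph{group} (closure under products is not automatic from your description). Second, you yourself flag the discreteness of the image of $\mu$ as the main obstacle, and your proposed fix (passing to a subshift recording transitions near non-free vertices) is a gesture, not an argument: you would need to show that the resulting finite-state system still carries an irreducible non-negative matrix with Perron--Frobenius eigenvalue $\lambda$, and that the stretch of any $\Psi\in Stab(L_\Phi^+)$ on leaves agrees with its action on that system. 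Neither of these is obvious when leaves can pass through non-free vertices of infinite valence.
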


\section{Attractive Fixed points of an IWIP automorphism}
In this section, we will prove the main theorem of this paper. Our result and the method is a direct generalisation of the main result of \cite{hilion2007maximal}.
\subsection{Structure of an Attractive Fixed point of an IWIP Automorphism}
\begin{prop}\label{Pr 4.1}
Let $\Phi \in Out(G, \mathcal{O})$ be an IWIP automorphism which can be represented by an appropriate train-track map $f:T \rightarrow T$. Let $\phi \in \Phi$, and suppose that $f$ is mated with $\phi$. Let's denote by $X \in \partial (G, \mathcal{O})$ an attractive fixed point of $\phi$. Then there is some vertex $v \in T$ such that:
\begin{enumerate}
\item $[v, f^{2}  (v)] = [v, f(v)] \cdot [f(v), f^{2}(v) ]$
\item if we denote by $R_v = [v, f(v)] \cdot [f(v), f^{2} (v) ] \cdot \ldots \cdot [f^{k} (v), f^{k+1} (v)] \cdot \ldots $, we have that $R_v$ represents the point $X$
\item the segment $[v, f(v)]$ has a splitting whose bricks are either an edge or belongs to the unique equivalence class of the N- path $p$ of $T$ (if it exists) and, in addition, the first brick of this splitting is an edge.
\end{enumerate}
\end{prop}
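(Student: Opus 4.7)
The plan is to construct $v$ as an iterate $f^{K}(v_{1})$ of a carefully chosen vertex $v_{1}$ on a geodesic ray representing $X$. Attractivity of $X$ together with the bounded cancellation lemma will yield items (1) and (2), while the appropriateness of $f$ will produce the splitting in item (3). Throughout I identify $\partial T$ with $\partial(G,\mathcal{O})$ and let $\rho=[x_{0},X)$ be the geodesic ray from a fixed basepoint $x_{0}$ representing $X$. Since $X$ is an attractive fixed point of $\partial\phi=\partial f$, there is an integer $N>0$ such that any edge path from $x_{0}$ with initial agreement at least $N$ along $\rho$ is carried by iterates of $\partial f$ to a sequence converging to $X$. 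Writing $C=\mathrm{BCC}(f)$, I pick $v_{1}$ on $\rho$ at distance substantially larger than $N+C$ from $x_{0}$, and set $v=f^{K}(v_{1})$ for a large $K$ to be chosen below.

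For items (1) and (2), I argue by induction that the forward orbit of $v$ moves monotonically along $\rho$. Since $\phi$ fixes $X$ and $f$ is mated with $\phi$, the map $\partial f$ sends the ray $[v,X)$ to the ray $[f(v),X)$; by bounded cancellation these two rays agree on a sub-ray whose complement has length at most $C$, so $f(v)$ must lie on $\rho$ beyond $v$ provided $d(v,f(v))>C$, which holds by our choice. The train-track legality at $f(v)$ then rules out any backtracking, so $[v,f(v)]\cdot[f(v),f^{2}(v)]$ is the tight edge path, giving item (1). The same inductive step applied at each $f^{k}(v)$ yields $[v,f^{k+1}(v)]=[v,f(v)]\cdot[f(v),f^{2}(v)]\cdots[f^{k}(v),f^{k+1}(v)]$, and passing to the direct limit gives item (2) because the nested union of the segments $[v,f^{k}(v)]$ exhausts the ray $[v,X)$ representing $X$.

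For item (3), I apply the appropriateness of $f$ to the finite path $[v_{1},f(v_{1})]$: there exists $K$ so that $f^{K}_{\#}([v_{1},f(v_{1})])$ splits into bricks that are edges or $N$-paths equivalent to the representative $p$ (when $p$ exists; otherwise the splitting is into edges only and item (3) is immediate). Since $[v,f(v)]=f^{K}_{\#}([v_{1},f(v_{1})])$ by the identification $v=f^{K}(v_{1})$, this is the desired splitting of $[v,f(v)]$. To guarantee that the first brick is an edge, I preselect $v_{1}$ so that the initial vertex-orbit of $v$ avoids the finitely many orbits of initial vertices of $N$-paths equivalent to $p$; such a $v_{1}$ exists on $\rho$ because stability yields only finitely many equivalence classes of $N$-paths, while the non-rational ray $\rho$ visits infinitely many vertex-orbits.

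The main obstacle will be reconciling the two conditions imposed on $v$: being sufficiently deep on $\rho$ for the bounded cancellation argument of items (1)-(2), and simultaneously being of the form $f^{K}(v_{1})$ with the vertex-orbit constraint needed to force the first brick in (3) to be an edge. This is resolved by choosing $v_{1}$ already deep on $\rho$ in the correct orbit; because $\phi$ fixes $X$ attractively, the iterate $f^{K}(v_{1})$ lies at least as deep on $\rho$ as $v_{1}$ for every $K\geq 0$, so a single $K$ can be chosen large enough to satisfy the appropriateness bound without sacrificing the attractivity requirement.
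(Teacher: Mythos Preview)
Your argument for items (1) and (2) has a genuine gap. You claim that bounded cancellation forces $f(v)$ to lie on $\rho$ beyond $v$, but this does not follow: the bounded cancellation lemma tells you that $f(v)$ lies within $C$ of the tightened image $[f(x_0),X)$, not that $f(v)$ lies on the original ray $\rho=[x_0,X)$. Even if $v$ is chosen arbitrarily deep on $\rho$, there is no a priori reason for its $f$-iterates to remain on $\rho$, let alone to progress monotonically along it. Your appeal to ``train-track legality at $f(v)$'' is also misplaced: the train-track condition controls images of single edges, not of the long segment $[v,f(v)]$, so it cannot by itself rule out backtracking at $f(v)$ in the concatenation $[v,f(v)]\cdot[f(v),f^{2}(v)]$. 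This is exactly the difficulty the paper's proof addresses: it starts from an arbitrary $v_0$ with $f^{k}(v_0)\to X$, projects successive images onto $R_{v_0}$ to obtain points $v_k$, and then runs a nested--compact--sets argument inside $[v_0,v_1]$ (the sets $V_k=\{x:f^{i}(x)\in[v_i,v_{i+1}]\text{ for all }i\le k\}$) to extract a point $v$ whose entire forward orbit satisfies $f^{k}(v)\in[v_k,v_{k+1}]\subset R_{v_0}$. Only after this is the concatenation in (1) legitimate.

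Your argument for the ``first brick is an edge'' clause of (3) is also problematic. The assertion that the non-rational ray $\rho$ visits infinitely many vertex-orbits is false: $T/G$ is a finite graph, so there are only finitely many vertex-orbits altogether. Moreover there is a circularity: you want to choose $v_1$ so that $v=f^{K}(v_1)$ avoids certain orbits, but $K$ is dictated by the appropriateness bound for $[v_1,f(v_1)]$, which already depends on $v_1$. The paper sidesteps this entirely: once a vertex $v'$ satisfying (1), (2) and the splitting part of (3) has been found, the fact that $d(f^{k}(v'),f^{k+1}(v'))\to\infty$ forces at least one brick of $[v',f(v')]$ to be an edge (lengths of iterated $N$-paths stay bounded), and one simply replaces $v'$ by the initial vertex of that edge-brick.
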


\begin{proof}
Firstly, we will prove that we can find a point $v_0 \in T$ which satisfies the properties $(i)$ and $(ii)$.\\
By the definition of an attractive fixed point of $\partial \phi$ and since $f : T \rightarrow T$ is a train track map, there exists a vertex $v_0$ of $T$ s.t. the limit of iterates $f^{k} (v_0)$ converges to $X$.\\
We denote by $R_{v_0}$ the line that is constructed as in item $(ii)$, corresponding to $v_0$, and by our assumption we get that $R_{v_0}$ represents $X$. For every $k$ we can define inductively the points $v_k$, obtaining $v_{k+1} \in R_{v_0}$ as the projection of the reduced image of $f(v_k)$ in $R_{v_0}$. Then it is clear to see that by construction that:
\begin{equation*}
[ v_{k+1}, v_{k+2} ] \subseteq [f(v_k), f(v_{k+1} ) ] = f_{ \# } ([v_k, v+{k+1}] ) \subseteq f([v_k, v_{k+1}])
\end{equation*}
For every $k = 0,1,2,...$, we define the set $V_k =  \{ x \in T | f^{i} (v) \in [ v_{i}, v_{i+1} ], $ for every  $ 0 \leq i \leq k \}$, and since for $y \in V_k$ we get that $f^k (y) \in [v_k, v_{k+1}]$, we have that $f ^{k} (V_k) \subseteq [v_k, v_{k+1}]$. We will prove that this is actually an equality, i.e. $f ^{k} (V_k) = [v_k, v_{k+1}]$ .\\
We will prove it by induction on $n = k$:\\
For $k=0$, it is obvious since $V_0 = [v_0, v_1] = f^{0} ([v_0, v_1])  = f^{0} (V_0)$. Suppose now that our induction hypothesis is true for $n= k$, i.e. $f ^{k} (V_k) = [v_k, v_{k+1}]$ and we will prove it for $n = k+1$.\\
Since $ [ v_{k+1}, v_{k+2} ] \subseteq f( [ v_k, v_{k+1}] )$, by the induction hypothesis we get that
$[ v_{k+1}, v_{k+2} ] \subseteq f(f^{k} (V_k) ) = f^{k+1} (V_k)  $.\\
Now by definition of $V_k$, we have that:
\begin{equation*}
x \in V_{k+1} \Longleftrightarrow x \in V_k  \mbox{ and } f^{k+1} (x) \in [v_{k+1}, v_{k+2}]
\end{equation*}
But for some $y \in [v_{k+1}, v_{k+2}]$, as have seen, there is some $x \in V_k$ s.t. $ f^{k+1} (x) = y  \in [v_{k+1}, v_{k+2}] $. Using the equivalence above, we get that $x \in V_{k+1}$ and so $y \in f^{k+1} (V_{k+1})$ and our claim has been proved.\\
Then the equality $f ^{k} (V_k) = [v_k, v_{k+1}]$, implies that every $V_k$ is non-empty for every $k$. Since the $V_k$'s form a decreasing sequence of non-empty closed subsets of  $[v_0, v_1]$ (thus compact), which implies that the intersection of all $V_k$'s is non-empty. In particular, there is some point $v \in V_k$ for every $k$. By the construction of $V_k$, we get that $f^{k} (v) \in [v_k, v_{k+1}]$ for every $k$ and so $v$ satisfies the properties (i) and (ii).\\
Now we would like to prove that we can choose $v$ to be a vertex which additionally satisfies (iii). Let's suppose that $v$ is not a vertex. Then after passing to some power, we can choose an appropriate train track representative, and then the path $u = [v_0, f^2 (v_0)]$ has a splitting for which the corresponding bricks are either edges or lifts of the unique (up to equivalence) $N$-path of $f$ (if it exists). Then we consider the initial vertex $v'$ of the brick of $u$ that contains $f(v_0)$. By choice of $v'$, we have that $f^{k} _{\#} (v') \in f^{k} _{\#} (u) = [f^{k} (v_0), f^{k+2} (v_0)]$ for every $k$. Moreover, as $v' \in [v, f(v)]$, we get that $f^{k} (v') \in [f^{k} (v), f^{k+1} (v)]$. Therefore $f^{k} (v') \in R_{v_0}$ for all $k$. Therefore $v'$ can be chosen to be a vertex.\\
Finally, since $X$ is an attractive fixed point of $\partial \phi$, we have that the distance between $f^{k}(v')$ and $f^{k+1}(v')$ is going to infinity, as $k$ is going to infinity. In particular, there is a brick $b$ of $[v', f(v')]$ s.t. the length of the reduced image of $f^{k}(b)$ is going to infinity, which implies that $b$ must be an edge (since the lengths of $f^{k} _{\#} (p)$ are bounded). Changing $v'$ by the initial vertex of $b$, we find a point that satisfies (i),(i), and (iii).
\end{proof}

Now assuming the splitting of $[v, f(v)] = b_0 \cdot b_1 \cdot \ldots \cdot b_q$, as in the previous proposition, we can group together the successive bricks which are $N$-paths and they are equivalent with $p$ ( we call this new splitting, the adapted splitting). Therefore we can assume that $b_0$ is a single edge and every other $b_i$ is either a single edge (and then we say that $b_i$ is a \textbf{regular} brick) or it is equivalent to a power of $p$ (and then we say that $b_i$ is \textbf{singular}).\\
Note that, by construction, between 2 singular bricks there is at least one regular brick. Moreover, the adapted splitting of $[v,f(v)]$ induces a splitting of $[f^{k}(v), f^{k+1} (v)] = b_{0,k} \cdot b_{1,k} \cdot \ldots \cdot b_{q,k}$ for every $k$, where $b_{i,k} = f^{k} _{\#} (b_i)$. Similarly, we extend the notions of regularity and singularity, using the corresponding notions as in the adapted splitting of $[v,f(v)]$.\\
Finally, note that by construction and since every $[f ^{k}(v), f^{k+1}(v)]$ starts with a regular brick, we have that the adapted splitting of $R_v$ still satisfies the property: between 2 singular bricks there is at least one regular brick. Note also that the lengths of the singular bricks of $R_v$ are bounded uniformly (for instance this follows by the quasiperiocity of the any leaf of the lamination).

\subsection{The Stabiliser of an Attractive Fixed point of an IWIP Automorphism}

\begin{theorem} \label{Th 4.2.}
If $X \in \partial (G, \mathcal{O})$ is an attractive fixed point of an IWIP automorphism $\phi \in Aut(G, \mathcal{O})$. Assume that $\psi \in Aut(G, \mathcal{O})$ fixes $X$, then if we denote by $\Phi, \Psi$ the outer automorphisms corresponding to $\phi, \psi$ respectively, it is true that $\Psi$ stabilises the attractive lamination $L^{+} _{\Phi}$.
\end{theorem}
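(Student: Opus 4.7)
The plan is to apply Proposition \ref{Stabilises} by exhibiting a topological representative $h : T \to T$ of $\Psi$ that stabilises the laminary language $\mathcal{L}(L^{+}_{\Phi})$ in $T$-coordinates. After replacing $\Phi$ by a positive power (permissible because the attractive lamination depends only on the power, and Lemma \ref{appropriate} furnishes an appropriate train-track representative for some such power), we may take $f : T \to T$ to be an appropriate train-track representative of $\Phi$ mated with $\phi$, and apply Proposition \ref{Pr 4.1} to produce a vertex $v \in T$ and a ray $R_v = [v, f(v)] \cdot [f(v), f^{2}(v)] \cdots$ representing $X$ whose adapted splitting has an edge as its initial brick.

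The key preliminary step will be to show that $\mathcal{L}^{+}_{f}$ coincides with the $G$-set of finite edge-path orbits appearing as subpaths of $R_v$. The inclusion ``subpaths of $R_v$ lie in $\mathcal{L}^{+}_{f}$'' follows because each brick of the adapted splitting of $[f^{k}(v), f^{k+1}(v)]$ is either an edge or a bounded $N$-path equivalent to $p$, so any finite subpath of $R_v$ is contained in some $f^{K}(e_0)$ for $K$ large (using bounded cancellation and that, after passing to a power, the transition matrix of $f$ is primitive on edges, so every edge orbit occurs inside $f^{K}(b_0)$). Conversely, since $R_v$ is a half-leaf of $L^{+}_{\Phi}$, quasi-periodicity of the leaves (property (iv) of Proposition \ref{properties of the lamination}) guarantees that for every $w \in \mathcal{L}^{+}_{f}$ some $G$-translate of $w$ occurs as a subpath of every sufficiently far tail of $R_v$.

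Next I would choose a topological representative $h : T \to T$ of $\Psi$ mated with $\psi$, and let $C$ denote its bounded-cancellation constant. The tightened image $h_{\#}(R_v)$ is a geodesic ray from $h(v)$ converging to $\partial \psi(X) = X$, and since two geodesic rays in a tree converging to the same end must eventually coincide, $h_{\#}(R_v)$ and $R_v$ share an infinite common tail $R^{\infty}$. For each $w \in \mathcal{L}^{+}_{f}$ I would then select a $G$-translate $g \cdot w$ appearing as a subpath of $R_v$ so deep inside $R^{\infty}$ that, after trimming $C$-neighbourhoods from both ends, $h_{\#, C}(g \cdot w)$ sits entirely inside $R^{\infty} \subseteq R_v$; by the characterisation above, $h_{\#, C}(g \cdot w) \in \mathcal{L}^{+}_{f}$. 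The $\psi$-equivariance of $h$ gives $h_{\#, C}(g \cdot w) = \psi(g) \cdot h_{\#, C}(w)$, and $G$-invariance of $\mathcal{L}^{+}_{f}$ forces $h_{\#, C}(w) \in \mathcal{L}^{+}_{f}$, as required. The main difficulty will be establishing the double inclusion characterising $\mathcal{L}^{+}_{f}$ via subpaths of $R_v$ (in particular the quasi-periodicity and primitive-transition arguments) and then bookkeeping the constant $C$ against the position of $g \cdot w$ inside $R_v$ so that the image really avoids both the initial transient of $h_{\#}(R_v)$ and the finitely many regions where $h_{\#}(R_v)$ diverges from $R^{\infty}$.
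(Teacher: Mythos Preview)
Your overall architecture (pass to an appropriate power, use Proposition~\ref{Pr 4.1} to get $R_v$, then exploit that $h_\#(R_v)$ and $R_v$ share an infinite tail) matches the paper's, but the ``key preliminary step'' is false as stated, and this breaks the argument.

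You claim that $\mathcal{L}^+_f$ coincides with the set of (orbits of) finite subpaths of $R_v$. The forward inclusion (every $w\in\mathcal{L}^+_f$ occurs in $R_v$) is fine and is what the paper also uses via quasi-periodicity. The reverse inclusion is wrong whenever the adapted splitting of $R_v$ contains singular bricks. A singular brick is (equivalent to a power of) the indivisible $N$-path $p$, which by its very nature contains an illegal turn; on the other hand, any $f^K(e_0)$ is a legal path by the train-track property. Hence no subpath of $R_v$ that crosses a singular brick can appear inside any $f^K(e_0)$, and such subpaths are \emph{not} in $\mathcal{L}^+_f$. Your justification (``each brick is either an edge or a bounded $N$-path, so any finite subpath of $R_v$ is contained in some $f^K(e_0)$'') is a non sequitur: primitivity of the transition matrix and bounded cancellation do nothing to remove the illegal turn sitting inside each singular brick.

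This is exactly the obstacle the paper's proof is designed to overcome. Rather than asserting that \emph{every} subpath of $R_v$ lies in the laminary language, the paper shows that $h_{\#,C}(u)$ can be located inside a single \emph{regular} brick of $R_v$ (regular bricks are iterates of edges, hence legal, hence in $\mathcal{L}^+_f$). To force this, the paper embeds $u$ into a longer word $U=u\,u_0\,u$ with $|h_\#(u_0)|>\ell_0+2C$ (where $\ell_0$ bounds the lengths of singular bricks), finds infinitely many occurrences of $h_{\#,C}(U)$ in $R_v$, and then does a short case analysis on how such an occurrence can straddle at most one singular brick; a length count shows that one of the two copies of $h_{\#,C}(u)$ must fall entirely into a regular brick. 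Your proposal is missing precisely this mechanism; without it, knowing that $h_{\#,C}(g\cdot w)\subseteq R_v$ does not let you conclude $h_{\#,C}(w)\in\mathcal{L}^+_f$.
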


\begin{proof}
Firstly, we note that since $X$ is an attractive fixed point of $\phi$, then $X$ is an attractive fixed point of $\phi ^{k}$ for every $k \geq 0$. Moreover, by the construction of the (attractive) lamination (see \cite{syrigos2014irreducible}) we get again that $L_{\Phi} ^{+} = L_{\Phi ^{k}} ^{+}$ for every positive integer $k$.\\
Therefore, after possibly changing $\Phi$ with $\Phi ^{k}$, we can assume that $\Phi$ is represented (by applying \ref{appropriate}) by an appropriate train track representative 
 $f : T \rightarrow T$. Here we fix our notation, more specifically let $h : T \rightarrow T$ be the $\mathcal{O}$-map which represents $\Psi$ and let denote by $C$ the bounded cancellation constant (\ref{BCL}) corresponding to $h$. Finally, we denote by $\ell _0$ the maximal length of a singular brick in $R_v$, using the notation of the proposition \ref{Pr 4.1}.\\
Now let $u$ be an edge path of the laminary language of the symbolic attractive lamination. We need to prove that there is an occurrence (orbit) of the reduced image (after deleting some extremal paths of length $C$), $h_{\#, C} (u)$ in $R_v$ which is completely contained in a regular brick of the adapted splitting. This implies that $h_{\#, C} (u)$ is contained in $\mathcal{L} (L(T))$ and then by applying \ref{Stabilises}, the theorem follows.\\
Now since every leaf of the lamination is quasiperiodic (see \ref{properties of the lamination} for the properties of the lamination), we can find an edge path $U$ in the laminary language corresponding to $T$, which has the type $U = u u_0 u$ (where by $u$ we mean that they are of the same orbit) and $u_0$ can be chosen arbitrarily long.\\
It is convenient for us to assume that the length of $h _{\#} (u_0)$ is longer than the number $\ell + 2C$. This can be done since every $\mathcal{O}$ - map, and in particular $h$, is a quasi-isometry. (Indeed if $h$ is a $(\mu, \nu)$ quasi- isometry, it is enough to consider the starting path $u_0$ to be longer than $\mu (\ell +2C +\nu )$).\\
Using again the quasiperiodicity and the fact that the regular bricks have unbounded lengths,  we have that there is some $K$, s.t. eventually for every $k \geq K$, we can find an occurrence of $U$ in every regular brick $b_{i,k}$. In particular, we can find infinitely many occurrences of $U$ in $R_v$ and so, by the definition of the action of an automorphism on the set of laminations, infinitely many occurrences of $h _{\#,C} (U)$ in $h _{\#} (R_v)$.\\
In order to prove it, firstly we note that since $\psi(X) = X$ and so $h _{\#} (R_v) \cap R_v$ is a subray of $R_v$, there are infinitely many occurrences of $h _{\#,C} (U)$ in $R_v$. Let's denote by $w_j$ a sequence of these distinct occurrences.\\
As we have seen above, the regular bricks of $R_v$ become arbitrarily long after some steps and therefore for every path fixed path $m$ there is a finite number of occurrences of $m$ that fully contain a regular brick.\\
If there is some $w_i$ that is fully contained in a regular brick, then there is some occurrence of  $h _{\#, C} (u) \subseteq h _{\#} (U)$ which is fully contained in this brick and our claim has been proved. Otherwise, using the remark above, after passing to a subsequence, we can see that every $w_j$ meet at most two regular and a singular bricks of $R_v$.
In particular, we can suppose that are three possibly cases and we will prove that our claim is always true:
\begin{enumerate}
\item  All $w_j$'s meet two regular bricks and one singular brick which is joining the regular ones.\\
In this case, $h _{\#,C} (U) = u_1 \cdot b \cdot u_2$, where each $u_i$ is contained in some regular brick. By the choice of $\ell _0$, we can suppose that at least one of $u_i$'s satisfies the inequality: \begin{equation*}
 | u _i| \geq \frac{|h _{\#,C} (U)| -\ell _0}{2}.
\end{equation*}
Moreover, since by the definition of the map $h _{\#,C}$ and using the bounded cancellation lemma, since $U = u u_0 u$ we have that:
\begin{equation*}
|h _{\#,C} (U)  | \geq 2\cdot | h _{\#,C} (u) | + | h _{\#,C} (u_0) |.
\end{equation*}
Combining these inequalities with the choice of $u_0$, we have that $|u_i| \geq | h _{\#,C} (u) |$, which means that $ h _{\#,C} (u) $ is a subpath of $u_i$ and so it is fully contained in a regular brick.
\item  All $w_j$'s meet two consecutive regular bricks.\\
In thiss case, as above, $ h _{\#,C} (U) = u_1 \cdot u_2$, where both $u_i$'s are contained in some regular bricks. As previously, there is some $u_i$ s.t.:
\begin{equation*}
|u_i | \geq \frac{| h _{\#,C} (U) |}{2} \geq \frac{2\cdot | h _{\#,C} (u) | + | h _{\#,C} (u) | }{2} \geq | h _{\#,C} (u)|
\end{equation*}
Therefore we have the same conclusion: that $ h _{\#,C} (u) $ is contained in some $u_i$ and therefore in some regular brick.
\item All $w_j$'s meet two consecutive bricks: a regular and a singular one.\\
In this case, without loss, we assume that $ h _{\#,C} (U) = u_1 \cdot b $, where $u_1$ is contained in a regular brick and $b'$ in a singular brick. Then by the choice of $\ell_0$, we get that:
\begin{equation*}
| u_1 | \geq | h _{\#,C} (U) | - \ell _0 \geq | h _{\#,C} (u)|
\end{equation*}
As in the previous cases, we get that $h _{\#,C} (u)$ is a subpath of a regular brick.
\end{enumerate}
\end{proof}

\begin{prop}	\label{INFINITE CYCLIC}
Let $\phi \in Aut(G, \mathcal{O})$ be an IWIP automorphism. If $X$ is an attractive fixed point of $\phi$, and $\psi \in Aut(G, \{H _i\} ^{t} )$ with $\psi(X) = X$ and $\psi$ has finite order, then $\psi$ is the identity.
	
\end{prop}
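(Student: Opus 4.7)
My plan is the following. First I observe that an attractive fixed point of an IWIP automorphism cannot be rational: were $X = u^{\infty}$ for some hyperbolic $u$, then $X$ would also be fixed by the inner automorphism $\iota_u$, and combined with the attractive behaviour of $\phi$ at $X$ this would force $\langle u \rangle$ to be essentially $\phi$-invariant, contradicting the IWIP hypothesis. Hence by Proposition \ref{injective} the restriction of $Aut(G,\mathcal{O}) \to Out(G,\mathcal{O})$ to $Stab(X)$ is injective, so it suffices to show that the outer class $[\psi]$ is trivial in $Out(G,\mathcal{O})$, equivalently (by faithfulness of the $G$-action on any $T \in \mathcal{O}$) to exhibit a simplicial realisation of $\psi$ that is the identity of some tree.

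Since $\psi$ has finite order, a standard fixed-point argument for the action of the finite cyclic group $\langle \psi \rangle$ on the contractible space $\mathcal{O}$ produces a tree $T \in \mathcal{O}$ and a simplicial isometry $\alpha : T \to T$ of finite order realising $\psi$, meaning $\alpha(g \cdot x) = \psi(g) \cdot \alpha(x)$ for all $g \in G$ and $x \in T$. After a barycentric subdivision if needed, $\alpha$ inverts no edge and is therefore elliptic, with non-empty fixed subtree $F = \text{Fix}(\alpha) \subseteq T$. Since $\partial \alpha$ matches $\partial \psi$ under the identification $\partial T \cong \partial(G, \mathcal{O})$ and $\psi(X) = X$, the geodesic from any $p \in F$ to $X$ is $\alpha$-invariant, and finite order forces it to be fixed pointwise; hence $F$ contains an infinite ray $R$ with endpoint $X$.

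The heart of the argument is to promote $F$ to all of $T$. Identifying $R$ with the ray representing $X$ as the attractive fixed point, I would exploit that $R$ is a leaf of the attractive lamination $L^{+}_{\Phi}$: by the quasi-periodicity of the leaves (Proposition \ref{properties of the lamination}) together with the fullness of the laminary language coming from the IWIP hypothesis (irreducibility of the transition matrix of a train-track representative of some power of $\Phi$), every $G$-orbit of edges of $T$ admits a representative lying on $R$. For any edge $e$ of $T$, pick $g \in G$ with $g \cdot e \in R \subseteq F$; then $\alpha(g \cdot e) = g \cdot e$ rewrites as $\alpha(e) = \psi(g)^{-1} g \cdot e$, so $\alpha(e) = e$ if and only if $\psi(g) = g$ (using trivial edge stabilisers). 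Combining $\psi \in Aut(G, \{H_i\}^t)$ with the observation that $\psi(X) = \psi(x_1)\psi(x_2)\cdots$ is automatically in reduced form (since $\psi$ preserves each factor $H_i$ setwise and hence sends letters to letters of the same factor), I deduce that $\psi(X) = X$ forces $\psi(x_k) = x_k$ for every letter $x_k$ of the reduced word $X$. I would then use this letterwise fixing to extract $\psi(g) = g$ for the relevant translators, conclude $\alpha = \text{id}_T$, and thus $\psi = \text{id}$.

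The main obstacle will be this last deduction: at a non-free vertex of $R$ with stabiliser $gH_ig^{-1}$, the condition $\psi(gH_ig^{-1}) = gH_ig^{-1}$ together with $\psi(H_i) = H_i$ only yields $g^{-1}\psi(g) \in H_i$, and removing this slack requires marrying the letterwise fixing of $X$ (which controls the behaviour of $\psi$ on the specific elements of $H_i$ appearing in $X$) with the finite order of $\psi$ and the free-product normaliser structure, in order to force $g^{-1}\psi(g) = 1$.
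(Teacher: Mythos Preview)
Your approach is genuinely different from the paper's, and it has a real gap beyond the obstacle you already flag.

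The step where you claim letterwise fixing of $X$ does not go through. Membership in $Aut(G,\{H_i\}^t)$ only says $\psi(H_i)=H_i$; it places no constraint on the images of the free generators $b_j$. A finite-order automorphism such as $\psi(b_1)=hb_1$, $\psi|_{H_i}=\mathrm{id}$ (with $h\in H_1$ of finite order) lies in $Aut(G,\{H_i\}^t)$ but does not send letters to letters, so the expression $\psi(X)=\psi(x_1)\psi(x_2)\cdots$ need not be reduced and you cannot read off $\psi(x_k)=x_k$. Even if you could, your own ``main obstacle'' remains: knowing $\psi$ fixes every letter occurring in $X$ does not obviously yield $\psi(g)=g$ for the translators $g$ carrying an arbitrary edge into $R$, and without that you cannot conclude $\alpha=\mathrm{id}_T$. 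The fact that $R$ meets every $G$-orbit of edges is far weaker than what you need; it does not force the fixed subtree to be $G$-invariant.

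The paper avoids this entirely by working with the fixed \emph{subgroup} rather than the fixed subtree directly. Since $\psi$ has finite order, every fixed point of $\partial\psi$ is singular, so $X\in\partial\,Fix(\psi)$. Because $\psi\in Aut(G,\{H_i\}^t)$, the Kurosh decomposition of $Fix(\psi)$ has each elliptic factor exactly a conjugate of some $H_j$; combined with the bound on Kurosh rank of fixed subgroups, if $\psi\neq\mathrm{id}$ then $Fix(\psi)$ has infinite index in $G$. The key external input is then Proposition~6.2 of \cite{syrigos2014irreducible}: an infinite-index subgroup does not carry $L^+_\Phi$. Hence some finite laminary word $w$ never appears in the $Fix(\psi)$-subtree $T'$; but $w$ occurs in every sufficiently long regular brick of $R_v$, and $R_v\subset T'$ since $X\in\partial\,Fix(\psi)$ --- a contradiction. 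This argument converts the geometric statement you are after (the fixed subtree is everything) into an index/rank computation, which is what makes it go through.
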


\begin{proof}
Firstly, note that since $\psi$ has finite order, as in the free case we have that any fixed point of $\partial \psi$ is singular, i.e. $X \in \partial Fix(\psi)$.\\
From the Kurosh subgroup theorem we have that $Fix(\psi) = A_1 * ... * A_q * F$, where each $H_i$ is contained in some conjugate of some $G_j$ and $F$ is a free group. Since $\psi \in Aut(G, \{H _i\} ^{t} )$, it's easy to see that every $A_i$ is exactly a conjugate of the corresponding $H_j$.\\
Combining the facts that the Kurosh rank of a the fixed subgroup is less than the Kurosh rank of $G$ and the Schreier formula for Kurosh ranks (see \cite{collins1994efficient} and \cite{sykiotis2002fixed})  if $\psi$ is not the identity, then $Fix(\psi)$ is not of finite index and therefore applying the Proposition 6.2 of \cite{syrigos2014irreducible}, does not carry the lamination. In other words, if we consider some (optimal) topological representative $h : T \rightarrow T$ of $\psi$ and we denote by $T'$ the subtree of $T$ corresponding to $Fix(\psi)$, we have that $X$ is fixed by $\psi$ iff some line representing  $X$ is contained in $T'$. As we have seen, it is not possible for $T'$ to contain some leaf $\ell$ of $L ^{+ } _{\Phi} (T)$.\\
%Proof of the claim: Suppose that our claim is not true. But the proof of the Proposition 7.2. of \cite{syrigos2014irreducible} implies that for every element $x$ of $F_n$, we have that there is some $p$ s.t. $x^{p} \in Fix(\psi)$. It follows that $\psi(x) = x$. Similarly, the same proof implies that for every $G_i$ we have that there is at least an element $h_i \in G_i$ that it belongs to $Fix(\psi)$, but then the equation $\psi(G_i) = g G_j g^{-1}$ implies that $i = j$ and $g \in G_i$, in particular $\psi \in K_{\mathcal{O}}$ which contradicts to our assumption.\\

So there is a finite subpath of some $\ell \in L ^{+ } _{\Phi} (T)$ which cannot be lifted in $T'$.
Using the notation of the previous propositions, we denote by $R_v$ to be the line that represents $X$. By the definition of the lamination, $w$ appears in all sufficiently long regular bricks of $R_v$ and in particular infinitely many times in $R_v$. Also, it appears infinitely many times in any ray  $R$ in $T$ representing $X$. As a consequence, no ray representing $X$ can be lifted to $T'$. Therefore $X$ cannot be fixed by $\partial \psi$, with only exception the case where $\psi$ is the identity.

\end{proof}

Now Theorem \ref{MAIN} is just a corollary of the previous statements. In particular,
\begin{cor}
	Let $\Phi \in Out(G, \mathcal{O})$ be an IWIP outer automorphism. If $X \in \partial (G, \mathcal{O})$ is an attractive fixed point of an IWIP automorphism $\phi \in \Phi$, then $Stab(X)$ injects into $Stab(L^{+}_{\Phi})$ via the quotient map $Aut(G, \mathcal{O}) \rightarrow Out(G, \mathcal{O})$. Moreover, there is a normal subgroup $B$ of $Stab(X)$ isomorphic to  a subgroup of $\bigoplus \limits_{i=1} ^{p} Out(H_i) $ and $Stab(X) /  B$ is isomorphic to $\mathbb{Z}$.
\end{cor}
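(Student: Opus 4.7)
The strategy is to transport the structural result \ref{Virtually cyclic} about $Stab(L^{+}_{\Phi})$ down to $Stab(X)$ via Theorem \ref{Th 4.2.}, killing the torsion contribution with the rigidity statement \ref{INFINITE CYCLIC}. First I would observe that $X$ is not a rational point: otherwise $X = u^{\pm\infty}$ for some hyperbolic $u$, and then $\langle u \rangle$ would sit inside a proper $\Phi^{k}$-invariant free factor, contradicting irreducibility of some power of $\Phi$. Consequently Proposition \ref{injective} shows the quotient map $Aut(G,\mathcal{O}) \to Out(G,\mathcal{O})$ is injective on $Stab(X)$, and Theorem \ref{Th 4.2.} then places the image inside $Stab(L^{+}_{\Phi})$, yielding the desired injection $Stab(X) \hookrightarrow Stab(L^{+}_{\Phi})$.

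Next I would invoke Theorem \ref{Virtually cyclic}: there is a periodic normal subgroup $A \trianglelefteq Stab(L^{+}_{\Phi}) \cap Out(G,\{H_{i}\}^{t})$ and a normal $B_{0} \trianglelefteq Stab(L^{+}_{\Phi})/A$, isomorphic to a subgroup of $\bigoplus_{i=1}^{p} Out(H_{i})$, with quotient isomorphic to $\mathbb{Z}$. The crucial step is to check that $Stab(X) \cap A = \{1\}$. Take $\psi \in Stab(X)$ whose outer class $\bar{\psi}$ lies in $A$: then $\bar{\psi}$ has some finite order $n$, so $\psi^{n}$ is inner; but $\psi^{n} \in Stab(X)$, and injectivity of the quotient map on $Stab(X)$ then forces $\psi^{n} = \mathrm{id}$ in $Aut(G,\mathcal{O})$. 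Thus $\psi$ is a finite order element of $Aut(G,\{H_{i}\}^{t})$ fixing $X$, and Proposition \ref{INFINITE CYCLIC} gives $\psi = \mathrm{id}$, hence $\bar{\psi} = 1$.

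Therefore $Stab(X)$ embeds into $Stab(L^{+}_{\Phi})/A$. Setting $B := Stab(X) \cap B_{0}$, this is normal in $Stab(X)$, isomorphic to a subgroup of $B_{0}$ and so to a subgroup of $\bigoplus_{i=1}^{p} Out(H_{i})$; and the quotient $Stab(X)/B$ injects into $(Stab(L^{+}_{\Phi})/A)/B_{0} \cong \mathbb{Z}$. Since the class of $\phi$ lies in $Stab(X)$ and projects to a generator of this $\mathbb{Z}$ (it is precisely the stretch-factor datum that drove the proof of \ref{Virtually cyclic}), the image is a non-trivial subgroup of $\mathbb{Z}$, and hence $Stab(X)/B \cong \mathbb{Z}$. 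I expect the only delicate point to be the torsion-killing step, i.e. combining injectivity of the quotient map on $Stab(X)$ with Proposition \ref{INFINITE CYCLIC} to see $Stab(X) \cap A$ is trivial; the rest is a short diagram chase inside $Stab(L^{+}_{\Phi})$.
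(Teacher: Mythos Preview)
Your proposal is correct and follows essentially the same route as the paper: show $X$ is not rational so that Proposition \ref{injective} gives an injection of $Stab(X)$ into $Out(G,\mathcal{O})$, use Theorem \ref{Th 4.2.} to land in $Stab(L^{+}_{\Phi})$, pull back the structure of Theorem \ref{Virtually cyclic}, and kill the periodic part via Proposition \ref{INFINITE CYCLIC}. Your write-up is in fact slightly more explicit than the paper's in two places: you spell out why $Stab(X)\cap A$ is trivial (passing from finite order in $Out$ to finite order in $Aut$ via the already-established injectivity), and you verify that the image in $\mathbb{Z}$ is nontrivial using $\phi$ itself, whereas the paper leaves this implicit. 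The only point where your sketch diverges is the reason $X$ is not rational: the paper simply notes that $\phi$ is IWIP and hence not inner, while you argue via a putative invariant free factor containing $\langle u\rangle$; that inference is not immediate as stated, but it is a side remark and does not affect the structure of the proof.
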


\begin{proof}
Since $\phi$ is an IWIP, and in particular it is not an inner automorphism, we have that $X$ is not a rational point, and therefore we can apply the proposition \ref{injective} and the theorem \ref{Th 4.2.}, we get that $Stab(X)$ can be seen as a subgroup of $Stab(\Lambda^{+} _{\Phi})$. Using the basic result of \cite{syrigos2014irreducible}  (Theorem \ref{Virtually cyclic}), we get that there is a normal periodic subgroup $A'$ of $Stab(X)$, such that the group $Stab(X) / A'$ has a normal subgroup $B'$ isomorphic to a subgroup of $\bigoplus \limits_{i=1} ^{p} Out(H_i)$ and $(Stab(X) / A) / B$ is isomorphic to $\mathbb{Z}$.
But then by applying \ref{INFINITE CYCLIC}, we get that $Stab(X) \cap Out(G, \{H_i\} ^{t})$ is torsion free, therefore $A'$ (which is a subgroup of $Stab(X) \cap Out(G, \{H_i\} ^{t})$ so torsion free and periodic) is the trivial group and we can conclude that there is a normal subgroup $B$ of $Stab(X)$ isomorphic to a subgroup of $\bigoplus \limits_{i=1} ^{p} Out(H_i)$ such that $Stab(X) /B$ is isomorphic to $ \mathbb{Z}$.
\end{proof}

An obvious corollary is the following:
\begin{cor}
	If $X \in \partial  (G, \{ H_1, ..., H_r \})$ is an attractive fixed point of an IWIP automorphism $\phi$ and suppose that every $Out(H_i)$ is finite, then $Stab(X)$ is virtually infinite cyclic.
\end{cor}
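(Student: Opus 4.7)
The plan is to obtain the result as an immediate consequence of the preceding corollary, which already establishes the short exact sequence
\begin{equation*}
1 \longrightarrow B \longrightarrow Stab(X) \longrightarrow \mathbb{Z} \longrightarrow 1,
\end{equation*}
where $B$ embeds into $\bigoplus_{i=1}^{p} Out(H_i)$. The only new ingredient to exploit is the hypothesis that every $Out(H_i)$ is finite.

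First I would note that since each $Out(H_i)$ is assumed finite, the direct sum $\bigoplus_{i=1}^{p} Out(H_i)$ is a finite group, hence so is $B$. Next I would lift a generator of $\mathbb{Z}$ to some element $g \in Stab(X)$ via the quotient map $\pi : Stab(X) \to Stab(X)/B \cong \mathbb{Z}$. The cyclic subgroup $\langle g \rangle$ is infinite cyclic (because $\pi(g)$ has infinite order), and the composition $\langle g \rangle \hookrightarrow Stab(X) \twoheadrightarrow \mathbb{Z}$ is surjective, giving $Stab(X) = \langle g \rangle \cdot B$. A standard index computation then yields
\begin{equation*}
[Stab(X) : \langle g \rangle] \;=\; [\langle g \rangle B : \langle g \rangle] \;=\; [B : B \cap \langle g \rangle] \;\leq\; |B| \;<\; \infty,
\end{equation*}
so $\langle g \rangle$ is an infinite cyclic subgroup of finite index, i.e. $Stab(X)$ is virtually infinite cyclic.

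Because everything reduces to the preceding corollary together with the elementary fact that an extension of $\mathbb{Z}$ by a finite group is virtually infinite cyclic, I do not expect any genuine obstacle here. The only point that deserves a brief justification in the write-up is why lifting a generator produces an honest infinite cyclic subgroup (which follows from the injectivity of $\pi$ on $\langle g \rangle$, guaranteed by $\pi(g)$ generating $\mathbb{Z}$); beyond that the argument is formal.
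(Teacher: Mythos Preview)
Your proposal is correct and matches the paper's approach exactly: the paper simply labels this an ``obvious corollary'' of the preceding result without giving any argument, and what you have written is precisely the obvious deduction the reader is expected to supply. If anything, you have been more careful than the paper by spelling out why an extension of $\mathbb{Z}$ by a finite group is virtually infinite cyclic.
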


Example of an automorphism $\phi$ and an attractive fixed point of $X$ of $\phi$ , such that $Stab(X)$ is not cyclic.
 \begin{exa} \label{example}
	Let's suppose that our free product decomposition is of the form $G = G_1 \ast <b_1> \ast <b_2>$ , where $b_i$ are of infinite order. Here $G_1$ is an elliptic subgroup, we denote by $F_2 = <b_1> \ast <b_2>$ the "free part" and  by $\mathcal{O}$  the corresponding outer space $\mathcal{O}(G, G_1, F_2)$. Then we define the automorphism $\phi$, which satisfies $\phi(a) = a$ for every $a \in G_1$, $\phi(b_1)= b_2g_1, \phi(b_2) = b_1 b_2$ where $g_1 \in G_1$,  and it is easy to see that $\phi \in Aut(G, \mathcal{O})$ is an IWIP automorphism relative to $\mathcal{O}$. Actually, the automorphism induces a train track representative. Since $Aut(G_1)$ can be seen as a subgroup of $Aut(G, \mathcal{O})$, it follows that there is an attractive fixed point $X$ of $\phi$ which contains just the letters $b_1, b_2, g_1$ of $\phi$ and we have that for every $\psi \in Aut(G_1)$ fixes $g_1$, i.e. $\psi (g_1) = g_1$, we have that $\psi(X) = X$. Therefore $Stab(X)$ contains the subgroup $A$ of $Aut(G_1)$ of automorphisms of $G_1$ that fix $g_1$. Therefore since we can choose $G_1$ with arbitarily big $Aut(G_1)$ and in particular $A$ to not be infinite cyclic, $Stab(X)$ isn't always infinite cyclic. For example, if $G_1$ is isomorphic to $F_3$ and $g_1$ an element of its free basis, we have that $Stab(X)$ contains a subgroup which is isomorphic to $Aut(F_2)$.
\end{exa}

\bibliographystyle{plain}
\bibliography{bibfile}

\end{document}